\documentclass{article}
\usepackage{amsthm}
\usepackage{amsfonts}
\usepackage{amsmath}
\usepackage{latexsym}
\usepackage{mathtools}
\usepackage{color}
\theoremstyle{definition}
\newtheorem{thm}{Theorem}
\newtheorem*{thma}{Theorem~A}
\newtheorem*{thmb}{Theorem~B}
\newtheorem{lem}{Lemma}
\newtheorem{prop}{Proposition}
\newtheorem{dfn}{Definition}
\newtheorem{cor}{Corollary}

\renewcommand{\labelenumi}{\theenumi)}

\begin{document}
\title{Convergence to pushed fronts and the behavior of level sets in monostable reaction-diffusion equations}
\author{Ryo Kiyono}
\date{}
\maketitle

\begin{abstract}
We study the behavior of solutions of a monostable reaction-diffusion equation $u_t=\Delta_x u +u_{yy} +f(u)$ ($x \in \mathbb{R}^{n-1}$, $y \in \mathbb{R}$, $t>0$), with the unstable equilibrium point $0$ and the stable equilibrium point $1$. Under the condition that the corresponding one-dimensional equation has a pushed front $\Phi_{c^*}(z)$ with $\Phi_{c^*}(-\infty)=1$, $\Phi_{c^*}(\infty)=0$, we show that the solution $u(x,y,t)$ approaches $\Phi_{c^*}(y-\gamma(x,t))$ for some $\gamma(x,t)$ as $t \to \infty$, if initially $u(x,y,0)$ decays sufficiently fast as $y \to \infty$ and is bounded below by some positive constant near $y=-\infty$. It is also shown that $\gamma(x,t)$ is approximated by the mean curvature flow with a drift term.
\end{abstract}

\section{Introduction}
In this paper, we consider the reaction-diffusion equation
\begin{equation}
\left\{
\begin{aligned}\label{Rea-Diff}
&u_{t}=\Delta u+f(u), && x\in\mathbb{R}^{n-1}, \, y\in\mathbb{R}, \, t>0, \\
&u(x,y,0)=u_{0}(x,y), && x\in\mathbb{R}^{n-1}, \, y\in\mathbb{R}.
\end{aligned}
\right.
\end{equation} 
Here $\Delta=\partial^2/\partial x_1^2+\cdots +\partial^2/\partial x_{n-1}^2+\partial^2/\partial y^2$ and $n\ge2$. Throughout the paper, the initial data $u_{0}$ is assumed to be nonnegative, bounded and uniformly continuous, and the reaction term $f$ is assumed to be of class $C^{1}$.
We are interested in the asymptotic behavior of solutions with non-compactly supported initial data in the case where the reaction term $f$ is of monostable-type.

We begin with recalling results on the existence and stability of traveling wave solutions in one dimensional monostable reaction diffusion equations. 
We consider the reaction term $f$ satisfying
\begin{equation*}
\mbox{(F)} \
\left\{
\begin{aligned}
&f(0)=f(1)=0,\quad f^{\prime}(0)>0,\quad f^{\prime}(1)<0,
\\
&f(s)>0 \ (s\in(0,1)),\quad f(s)<0\ (s\in(-\infty,0)\cup(1,\infty)).
\end{aligned}
\right.
\end{equation*}
Then it is well-known that there exists the minimal speed $c^{*}\ge2\sqrt{f^{\prime}(0)}$ for traveling front solutions joining the equilibria $1$ and $0$.
More precisely, for any $c\ge c^{*}$, equation \eqref{Rea-Diff} has a traveling front solution written in the form $u(x,t)=\Phi_{c}(x-ct)$ for a profile function $\Phi_{c}$ satisfying
\begin{gather*}
\Phi_{c}^{\prime\prime}+c\Phi_{c}^{\prime}+f(\Phi_{c})=0, \\
\lim_{z\to-\infty}\Phi_{c}(z)=1, \quad \lim_{z\to\infty}\Phi_{c}(z)=0.
\end{gather*}
It is known that there exist $\alpha, \beta\ge0$ with $(\alpha,\beta) \neq (0,0)$ such that
\begin{gather}
\label{asym}
\Phi_{c}(s)=(\alpha +o(1))e^{\lambda_+(c)s} \quad \mbox{if} \quad c>c^{*}, \\
\label{asym1}
\Phi_{c}(s)=(\alpha s+\beta +o(1))e^{\lambda_-(c)s} \quad \mbox{if} \quad c=c^{*},
\end{gather}
where $\lambda_{+}(c)$ and $\lambda_{-}(c)$ are the largest root and the smallest root of the quadratic equation
\begin{equation}
\lambda^{2}+c\lambda+f^{\prime}(0)=0,
\end{equation}
respectively. 
The traveling front solution $u(x,t)=\Phi_{c}(x-ct)$ is called a pulled front if either $c=c^{*}=2\sqrt{f^{\prime}(0)}$ or $c>c^{*}$ holds, and is called a pushed front if $c=c^{*}>2\sqrt{f^{\prime}(0)}$.

Concerning asymptotic behavior of solutions,
many results are known for the one-dimensional problem
\begin{equation*}
\left\{
\begin{aligned}
&u_{t}=u_{yy}+f(u), && y\in\mathbb{R}, \, t>0, \\
&u(y,0)=u_{0}(y), && y\in\mathbb{R}.
\end{aligned}
\right.
\end{equation*} 
In the pioneer work \cite{KPP}, it is shown that if $f(u)=u(1-u)$ and 
\begin{equation*}
u_0(y)=\left\{
\begin{aligned}
&1 \quad (y<0), \\
&0 \quad (y \ge 0),
\end{aligned}
\right.
\end{equation*} 
then $u(z+\sigma(t),t)$ converges uniformly to $\Phi_{c^{*}}(z)$ as $t \to \infty$
for some function $\sigma(t)$ satisfying
\begin{equation*}
\sigma(t)=2t+o(t) \quad (t \to \infty).
\end{equation*}
The refined behavior of $\sigma(t)$ is revealed in \cite{MR705746,MR494541}. More precisely, it is shown that $\sigma(t)$ satisfies
\begin{equation*}
\sigma(t)=2t-\frac{3}{2}\ln t+z_* +o(1)  \quad (t \to \infty)
\end{equation*}
for some number $z_*$.
Similar results for more general reaction terms and initial functions are obtained in \cite{MR4412555,MR422875,MR803086,MR509494}.

For pushed fronts, Stokes \cite{MR682241} and Rothe \cite{MR639447} proved that, if the initial data $u_{0}(y)$ satisfies
\begin{equation*}
0\le u_0(y)\le1, \qquad
\liminf_{y\rightarrow-\infty}u_0(y)>0, \qquad
u_0(y)\le Ke^{\lambda y},
\end{equation*}
for some constants $K>0$ and $\lambda<\lambda_+(c_*)$, then 
\begin{equation}
u(z+c^*t,t)\rightarrow \Phi_{c^*}(z+\xi) \quad (t\rightarrow\infty)
\end{equation}
for some constant $\xi$. 
In contrast to pulled fronts with the minimal speed,
the logarithmic correction term does not appear for pushed fronts.
This is analogous to the result in the bistable case \cite{MR442480}.

In higher dimensional cases, the pioneering work has done by Aronson and Weinberger \cite{MR511740}. 
They prove that if the initial data has compact support and satisfies $0 \le u_0 \le 1$, $u_0 \not\equiv0$, then
\begin{equation*}
\lim_{t \to \infty} \sup_{|x|+|y| \ge (c^*+\delta)t} |u(x,y,t)|=0,
\qquad
\lim_{t \to \infty} \sup_{|x|+|y| \le (c^*-\delta)t} |u(x,y,t)-1|=0,
\end{equation*}
for any $\delta>0$. Since then, the large-time behavior of solutions with compactly supported initial data has been extensively studied \cite{MR3333711,MR670523, MR4026189,MR801583}. 

Our interest is the asymptotic behavior of solutions with non-compactly supported initial data. In contrast to the case where the initial data has compact support, less is known about the behavior of such solutions.
To observe what can occur, we recall results for bistable reaction diffusion equations established by Matano and Nara \cite{MR2837694}.
Under some mild assumptions on the initial data,
they showed  the convergence of a solution to $\Psi(y-\gamma(x,t))$ for some function $\gamma(x,t)$, where $\Psi$ denotes a one-dimensional traveling wave solution. Moreover, they also found that $\gamma(x,t)$ is approximated by the mean curvature flow with a drift term.
To be more precise, the following result is proved.

\begin{thma}[\cite{MR2837694}]
Suppose that $f$ satisfies
\begin{gather*}
f(0)=f(1)=0,\quad f^{\prime}(0)<0,\quad f^{\prime}(1)<0, \quad
f(s) \left\{ 
\begin{aligned}
&>0 &&\mbox{if } s \in (-\infty,0), \\
&<0 &&\mbox{if } s \in (1,\infty)
\end{aligned}
\right.
\end{gather*}
and that there exist $c\in\mathbf{R}$ and $\Psi \in C^2 (\mathbf{R})$ satisfying
\begin{gather*}
\Psi^{\prime\prime}+c\Psi^{\prime}+f(\Psi)=0, \quad
\lim_{z\to-\infty}\Psi(z)=1, \quad \lim_{z\to\infty}\Psi(z)=0.
\end{gather*}
Put 
\begin{gather*}
s_+:=\inf \{ s_0 \in (0,1); \, f>0 \mbox{ on } (s_0,1)\},\\
s_-:=\sup \{ s_0 \in (0,1); \, f<0 \mbox{ on } (0,s_0)\},
\end{gather*}
and suppose that $u_0$ satisfies
\begin{equation}
\liminf_{y\rightarrow-\infty}\inf_{x\in\mathbf{R}^{n-1}}u_0 (x,y)>s_+,\quad \limsup_{y\rightarrow\infty}\sup_{x\in\mathbf{R}^{n-1}}u_0 (x,y)<s_-.
\end{equation}
Let $u$ be a solution of \eqref{Rea-Diff}.
Then there exists a smooth function  $\gamma=\gamma(x,t)$ with the following properties.
\begin{enumerate}
\item[(i)] 
There exists $T>0$ such that 
\begin{equation*}
\{ (x,y,t) \in \mathbf{R}^{n-1} \times \mathbf{R} \times [T,\infty); \, u(x,y,t)=\Psi(0)\}=\{y=\gamma(x,t)\}.
\end{equation*}
Moreover, it holds that
\begin{equation*}
\lim_{t\rightarrow\infty}\sup_{(x,y)\in\mathbf{R}^n }|u(x,y,t)-\Psi(y-\gamma(x,t))|=0.
\end{equation*}
\item[(ii)]
For any $\varepsilon>0$, there exists $\tau_{\varepsilon}\in[T,\infty)$ such that the solution $U(x,t)$ of the problem
\begin{equation*}
\left\{
\begin{aligned}
&\frac{U_t}{\sqrt{1+|\nabla_x U|^2}}=\mathrm{div}\left(\frac{\nabla_x U}{\sqrt{1+|\nabla_x U|}}\right)+c,&&x\in\mathbf{R}^{n-1},t>0,\\
&U(x,0)=\gamma(x,\tau_{\varepsilon}),&&x\in\mathbf{R}^{n-1},
\end{aligned}
\right.
\end{equation*}
satisfies
\begin{equation*}
\sup_{x\in\mathbf{R}^{n-1},t\ge\tau_{\varepsilon}}|\gamma(x,t)-U(x,t-\tau_{\varepsilon})|\le\varepsilon.
\end{equation*}
\end{enumerate}
\end{thma}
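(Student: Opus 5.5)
The natural route to Theorem~A follows Matano and Nara: build sub- and supersolutions of the form $\Psi(\cdot)\pm q(t)$, use them to trap the solution near a front, and then obtain the level set and its curvature-flow approximation from that trapping.

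\emph{Step 1: rough trapping.} First I would show that after a finite time $T_0$ the solution lies between two shifted planar fronts. Because $f'(0)<0$ and $f'(1)<0$, the assumptions on $u_0$ let me place sub- and supersolutions of Fife--McLeod type under and over it:
\begin{equation*}
w^\pm(x,y,t)=\Psi\bigl(y-ct\mp\xi_0\mp K(1-e^{-\beta t})\bigr)\pm q_0e^{-\beta t}.
\end{equation*}
Concretely, I would choose $L$ so that $u_0>s_++\delta$ for $y<-L$ and $u_0<s_--\delta$ for $y>L$. The ODE flows $\dot s=f(s)$ started at these levels drive the values toward $1$ and $0$, respectively. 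Combined with the Fife--McLeod construction, this yields
\begin{equation*}
\Psi(y-ct+C)-\varepsilon(t)\le u\le \Psi(y-ct-C)+\varepsilon(t),\qquad \varepsilon(t)\to0,
\end{equation*}
uniformly in $x$.

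\emph{Step 2: the level set is a graph, and convergence.} Next I would prove that $u_y<0$ in the region where $u$ is bounded away from $0$ and $1$, for $t\ge T$. I would argue by contradiction with a compactness argument. Take a sequence $(x_k,t_k)$ with $t_k\to\infty$ and shift the solution by $(x_k,\,ct_k+y)$. The limits are entire solutions trapped between two shifted fronts. A Liouville-type result (Berestycki--Hamel, or a sliding argument using the exponential stability of the front) then says every such entire solution is $\Psi(y-ct-\theta(x,t))$ with $\theta$ varying slowly; in particular it is strictly monotone in $y$. This gives $u_y<0$ on $\{\delta\le u\le 1-\delta\}$, and the implicit function theorem then produces a smooth $\gamma$ with $\{u=\Psi(0)\}=\{y=\gamma(x,t)\}$. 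The same compactness argument also gives $\sup|u-\Psi(y-\gamma)|\to0$, because every limit profile must have its $\Psi(0)$-level at the origin. To make the sliding comparison legitimate I would also need $\|\nabla_x\gamma\|\to0$ and bounded higher derivatives. I would get these from parabolic estimates on $u$ together with the identity $\nabla_x\gamma=-\nabla_xu/u_y$.

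\emph{Step 3: curvature flow approximation.} For part (ii) I would write $\phi(x,y,t)=\Psi\bigl((y-U(x,t))/\sqrt{1+|\nabla U|^2}\bigr)$. Computing $\phi_t-\Delta\phi-f(\phi)$ shows the residual is controlled by $\|D^2U\|^2$ and $\|D^3U\|$ type quantities, provided $U$ solves the graphical mean curvature flow with drift $c$. Interior estimates for this quasilinear flow (Ecker--Huisken) show that these quantities stay small for all later time once $\|\nabla U(\cdot,0)\|$ and $\|D^2U(\cdot,0)\|$ are small, and Step~2 guarantees this for $U(\cdot,0)=\gamma(\cdot,\tau_\varepsilon)$ when $\tau_\varepsilon$ is large. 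I would then set
\begin{equation*}
\phi^\pm=\Psi\Bigl(\frac{y-U\mp p(t)}{\sqrt{1+|\nabla U|^2}}\Bigr)\pm q(t),
\end{equation*}
with $q(t)=\varepsilon' e^{-\beta t}+\eta$ and $p(t)=K\int_0^t q$. These absorb the residual, which gives the $\varepsilon$-closeness uniformly for all $t\ge\tau_\varepsilon$.

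\emph{Main obstacle.} The hardest part is making Step~3 uniform over infinite time. The residual errors are small but do not decay, so the shift $p(t)$ could grow linearly. To prevent this, I would first try to prove that the curvatures of $U$ decay like $(1+t)^{-1/2}$, using the Ecker--Huisken gradient and curvature estimates; this makes the residual integrable in time so that $p$ stays bounded. If that decay is not available, I would instead try a restart argument on time intervals of fixed length, using the exponential stability of $\Psi$ to keep the accumulated error from growing.
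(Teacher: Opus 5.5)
Your Steps 1 and 2 are, at sketch level, the Matano--Nara argument. The paper does not prove Theorem~A; it quotes it from \cite{MR2837694} and transplants that argument to the monostable case in Sections~3--5: trapping between shifted fronts, the Berestycki--Hamel Liouville theorem for $\omega$-limits, $-u_y>0$ near the level set, and decay of $x$-derivatives. (The Liouville theorem gives a constant shift, not a slowly varying $\theta$, but that is all you need.)

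The genuine gap is in Step~3, exactly where you flag it, and neither of your remedies closes it.

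\emph{Why decay of order $(1+t)^{-1/2}$ is not enough.} Write $\zeta=(y-U)/\sqrt{1+|\nabla U|^2}$. The residual of $\Psi(\zeta)$ contains $\zeta\Psi'(\zeta)|D^2U|^2$, $\zeta\Psi'(\zeta)\,\nabla U\cdot\nabla U_t$, $\zeta\Psi'(\zeta)\,\nabla U\cdot D^3U$ and $\zeta\Psi''(\zeta)|\nabla U|^2|D^2U|$; compare the $I_1,I_2$ terms in the proof of Lemma~\ref{lem13}. These terms have no sign, so your additive term $q(t)$ must dominate them at each time. Your shift $p=K\int_0^tq$ then stays bounded only if the residual is integrable in $t$. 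The Ecker--Huisken rate $|D^2U|\le C(1+t)^{-1/2}$ gives $|D^2U|^2\sim t^{-1}$, whose integral diverges, so the shift still drifts off logarithmically.

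What you need are heat-equation rates: $|\nabla U|\le Ct^{-1/2}$, $|D^2U|\le Ct^{-1}$, $|D^3U|+|\nabla U_t|\le Ct^{-3/2}$, which make the residual $O(t^{-2})$. These rates come from the \emph{boundedness of the height} $\gamma(\cdot,\tau_\varepsilon)-c\tau_\varepsilon$, which your Step~1 supplies but Step~3 never uses. They do not follow from smallness of gradient or curvature. They are also hard to get directly for the graphical flow, because the drift leaves the non-scale-invariant term $c(\sqrt{1+|\nabla U|^2}-1)$.

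\emph{Why restarting does not help.} Exponential stability of $\Psi$ holds only modulo translation, and translation is a neutral mode. The flow is only non-expansive in $L^\infty$, so with $E_k=\|\gamma(\cdot,t_k)-U(\cdot,t_k-\tau_\varepsilon)\|_\infty$ and $e_k$ the error committed on the $k$-th interval, you get only $E_{k+1}\le E_k+e_k$. You again need $\sum_k e_k<\infty$. Moreover, each restart changes the reference flow, whereas (ii) compares $\gamma$ with a single $U$ on all of $[\tau_\varepsilon,\infty)$.

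\emph{The missing idea} is Lemmas~\ref{lem1} and~\ref{lem2}: build the barriers not from $U$ but from the solution $V$ of $V_t=\Delta_xV+\frac c2|\nabla_xV|^2+c$ with the same initial data.
\begin{itemize}
\item The Hopf--Cole substitution $W=e^{c(V-ct)/2}$ turns this into $W_t=\Delta_xW$, so bounded $V(\cdot,0)$ yields exactly the rates above.
\item The mismatch between the curvature operator and the $V$-equation is $c\,(1+\frac s2-\sqrt{1+s})/\sqrt{1+s}+O(s|D^2V|)$ with $s=|\nabla V|^2$. This is $O(s^2)+O(s|D^2V|)=O(t^{-2})$.
\item One then uses $\Psi\bigl((y-V)/\sqrt{1+|\nabla V|^2}\mp q(t)\bigr)\pm p(t)$, with additive term $p(t)\approx\min\{Ct^{-2},C_1\}$ and bounded shift $q=C_0\int_0^tp\le\varepsilon$.
\item As in Lemma~\ref{lem15}, $u^-\le u\le u^+$ is converted into $|\gamma-V|\le\varepsilon$ using $\inf(-u_y)>0$ near the level set.
\item Finally, Lemma~\ref{lem1} gives $\sup_{t\ge0}\|U-V\|_{L^\infty}\le\varepsilon$ once $\|\nabla_x\gamma(\cdot,\tau_\varepsilon)\|_{W^{1,\infty}}$ is small, which your Step~2 guarantees.
\end{itemize}
Without this comparison with $V$, or an independent proof of the $t^{-1}$ and $t^{-3/2}$ decay for the graphical flow with drift, Step~3 does not yield (ii).
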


In the case where $f$ is of monostable type, a similar result is obtained by Wang \cite{MR3746497}.
\begin{thmb}[\cite{MR3746497}]
In addition to (F), assume that 
\begin{equation}\label{assmpf}
0<f'(0)u-f(u)<Mu^{1+\alpha}
\quad \mbox{if } \ u \in (0,1)
\end{equation}
for some constants $M>0$ and $\alpha \in (0,1]$.
Then, there exists $\delta_0>0$ such that if the initial data $u_0$ satisfies
\begin{gather}
\liminf_{y \to -\infty} \inf_{x \in \mathbb{R}^{n-1}} u_0(x,y)>1-\delta_0, \label{lbidni} \\
0<\liminf_{y\to\infty}\inf_{x\in\mathbf{R}^{n-1}} \frac{u_0(x,y)}{\Phi_c(y)}, \quad
\limsup_{y\to\infty}\sup_{x\in\mathbf{R}^{n-1}}\frac{u_0(x,y)}{\Phi_c(y)}<\infty 
\ \mbox{ for some } \ c \ge c_*, \label{init2}
\end{gather}
the following are true.
\begin{enumerate}
\item[(i)]
The assertions of Theorem~A (i) with $\Psi$ replaced by $\Phi_c$ hold for some smooth function $\gamma=\gamma(x,t)$.
\item[(ii)]
For any $\varepsilon>0$, there exists $T_{\varepsilon}>0$ such that the inequalities
\begin{equation*}
v^-(x,t)-\varepsilon\le\gamma(x,t)\le v^+(x,t)+\varepsilon,\quad t\ge T_{\varepsilon},
\end{equation*}
hold for the solutions $v^-$ and $v^+$ of the initial value problems
\begin{equation*}
\left\{
\begin{aligned}
v^-_t&=\Delta_x v^--k|\nabla_x v^-|^2+c, && x\in\mathbf{R}^{n-1}, \, t>0,\\
v^-(x,0)&=\gamma(x,T_{\varepsilon}),         && x\in\mathbf{R}^{n-1},
\end{aligned}
\right.
\end{equation*}
\begin{equation*}
\left\{
\begin{aligned}
v^+_t&=\Delta_x v^++k|\nabla_x v^+|^2+c, && x\in\mathbf{R}^{n-1}, \, t>0,\\
v^+(x,0)&=\gamma(x,T_{\varepsilon}),         && x\in\mathbf{R}^{n-1},
\end{aligned}
\right.
\end{equation*}
where $k:=\sup_{z\in\mathbf{R}}|\Phi^{\prime\prime}_c(z)|/|\Phi^{\prime}_c(z)|$.
\end{enumerate}
\end{thmb}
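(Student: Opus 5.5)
The plan is to prove both parts of Theorem~B by comparison with explicitly constructed sub- and supersolutions of the form $\Phi_c(y-v(x,t))$, where $v$ solves one of the viscous Hamilton--Jacobi equations of part (ii), suitably corrected near the two equilibria.

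\textbf{Step 1: the model computation.} For $w(x,y,t)=\Phi_c(y-v(x,t))$, using $\Phi_c''+c\Phi_c'+f(\Phi_c)=0$, one gets
\begin{equation*}
w_t-\Delta w-f(w)=-\Phi_c'\,(v_t-\Delta_x v-c)-\Phi_c''\,|\nabla_x v|^2 .
\end{equation*}
If $v_t=\Delta_x v+k|\nabla_x v|^2+c$, the right-hand side equals $|\nabla_x v|^2(-k\Phi_c'-\Phi_c'')\ge 0$, because $\Phi_c'<0$ and $k\ge |\Phi_c''|/|\Phi_c'|$. So $w$ is a supersolution; with $-k$ instead of $+k$ it is a subsolution. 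Finiteness of $k$ should follow from the asymptotics \eqref{asym}--\eqref{asym1}, since $\Phi_c''/\Phi_c'$ tends to finite limits at $\pm\infty$.

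\textbf{Step 2: robust comparison functions.} To absorb the initial discrepancy I would use
\begin{equation*}
W^\pm(x,y,t)=\Phi_c\bigl(y-v^\pm(x,t)\mp q(t)\bigr)\pm p(t)\,\theta\bigl(y-v^\pm(x,t)\bigr),
\end{equation*}
with $p(t)=p_0e^{-\mu t}$ and $q'(t)=Cp(t)$, so that $q$ stays bounded. Here $\theta$ is a weight equal to $1$ near $z=-\infty$ and comparable to $\Phi_c(z)$ near $z=+\infty$.
\begin{itemize}
\item Near $z=-\infty$, the stability $f'(1)<0$ gives the decay of $p$. This is where \eqref{lbidni} with small $\delta_0$ enters.
\item Near $z=+\infty$ a constant perturbation would destroy the tail. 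Instead the perturbation is proportional to $\Phi_c$, and the KPP-type bound \eqref{assmpf} controls $f(\Phi_c\pm p\theta)-f(\Phi_c)\mp f'(0)p\theta$ by $M(\Phi_c+p\theta)^{1+\alpha}$, which is $o(p\theta)$ in the tail.
\item In the middle region, $-\Phi_c'\ge c_0>0$, and the term $Cp\,|\Phi_c'|$ produced by $q'$ dominates.
\end{itemize}
By \eqref{init2}, for suitable shifts one has $W^-(\cdot,0)\le u_0\le W^+(\cdot,0)$, after replacing $u$ by $u(\cdot,T_0)$ if needed. The comparison principle then traps $u$ between $\Phi_c(y-v^--q)-p$ and $\Phi_c(y-v^++q)+p$, up to weights.

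\textbf{Step 3: the level set and convergence (part (i)).} From the sandwich in Step 2, $u$ stays uniformly away from $\Phi_c(0)$ outside a strip of bounded width around the front. To get a single-valued graph $y=\gamma(x,t)$ I must show $u_y<0$ on that strip for large $t$. I would argue by contradiction using a sequence $(x_k,y_k,t_k)$ with $t_k\to\infty$. Parabolic estimates let me pass to the limit and obtain an entire solution $\tilde u$ trapped between two translates of planar fronts $\Phi_c(y-\text{const}\cdot\ldots)$. A Liouville-type rigidity result for such entire solutions (the monostable analogue of the Matano--Nara argument, proved via a sliding method with the same weighted sub/supersolutions) should force $\tilde u(x,y,t)=\Phi_c(y-ct-a)$. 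For this limit $\tilde u_y<0$, which gives the contradiction. The implicit function theorem then yields a smooth $\gamma$, and the same compactness argument gives $\sup|u-\Phi_c(y-\gamma)|\to0$.

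\textbf{Step 4: part (ii).} Restarting Step 2 at a late time $T_\varepsilon$, where by Step 3 $u$ is $\varepsilon$-close to $\Phi_c(y-\gamma(x,T_\varepsilon))$, allows $p_0$ and $q$ to be taken $O(\varepsilon)$. Comparing level sets of $W^\pm$ with $\{u=\Phi_c(0)\}$ then gives $v^--\varepsilon\le\gamma\le v^++\varepsilon$.

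\textbf{Main obstacle.} I expect the hardest step to be the tail region in Step 2. There the perturbation must remain proportional to $\Phi_c$, which decays like $e^{\lambda_+ z}$ or $e^{\lambda_- z}$, so the weight $\theta$ has to be tuned to the exponent and the $x$-dependence through $v^\pm$ must be handled. Controlling the gradient terms $|\nabla_x v|^2\theta''$ uniformly requires global bounds on $\nabla_x v^\pm$. For these I would first derive Bernstein-type estimates, using the Cole--Hopf transform $e^{\pm k v}$ to linearize the Hamilton--Jacobi equations.
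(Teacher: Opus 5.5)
The paper does not prove Theorem~B; it quotes it from Wang. Your Steps 1--3 follow the architecture the paper uses for its own Theorem~\ref{thm1}:
\begin{itemize}
\item global trapping by the multiplicative functions of Lemma~\ref{lem4} (case 2 of Proposition~\ref{lem3});
\item planar $\omega$-limit points via the Berestycki--Hamel rigidity result (Lemma~\ref{lem7}), which you can simply cite instead of reproving by sliding;
\item monotonicity in $z$ near the level set, followed by the implicit function theorem.
\end{itemize}
That part is sound. The paper even remarks that (i) holds under \eqref{init1} and \eqref{init2} alone.

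\textbf{The gap is in Step 4.} You rightly made the perturbation proportional to $\Phi_c$ in the leading edge. Consequently, as $y\to+\infty$, the restarted supersolution is only $W^+(x,y,0)=(1+O(\varepsilon))\,\Phi_c\bigl(y-\gamma(x,T_\varepsilon)-O(\varepsilon)\bigr)$. The ordering $u(\cdot,T_\varepsilon)\le W^+(\cdot,0)$ therefore requires
\[
\sup_{x}\,\limsup_{y\to\infty}\frac{u(x,y,T_\varepsilon)}{\Phi_c\bigl(y-\gamma(x,T_\varepsilon)\bigr)}\le 1+O(\varepsilon).
\]
The symmetric bound from below is needed for $W^-$. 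In other words, you need closeness in the relative sense, up to $y=+\infty$, aligned with the $x$-dependent shift $\gamma(x,T_\varepsilon)$. Nothing you have gives this:
\begin{itemize}
\item Step 3 only gives $\sup|u-\Phi_c(y-\gamma)|\to0$, which carries no information where $\Phi_c<\varepsilon$.
\item The Step 2 sandwich only bounds $u/\Phi_c$ by fixed constants relative to two fixed shifts.
\end{itemize}

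Changing the weight does not help either.
\begin{itemize}
\item A weight decaying faster than $\Phi_c$ cannot dominate a discrepancy of size $O(\Phi_c)$. This is the device of Lemma~\ref{lem13}, $\psi=\chi(e^{\lambda z})$ with $\lambda_-<\lambda<\lambda_+$, and it works there only because a pushed front decays like $e^{\lambda_- z}$.
\item A weight $e^{\lambda' z}$ decaying slower than $\Phi_c$ has $\lambda'^2+c\lambda'+f'(0)>0$. Then $p$ must grow and $q$ becomes unbounded.
\end{itemize}
For pulled fronts the leading edge keeps the memory of the initial amplitude and drives the front. So a genuine extra lemma is missing, for instance
\[
(1-\varepsilon)\Phi_c(y-\gamma+\varepsilon)\le u\le(1+\varepsilon)\Phi_c(y-\gamma-\varepsilon)\quad\text{for all large } t, \text{ uniformly in } (x,y).
\]
This is presumably where the hypothesis $0<f'(0)u-f(u)<Mu^{1+\alpha}$ must do its work. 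One would compare with the linearized equation from above, using $f(u)\le f'(0)u$, and control the defect $Mu^{1+\alpha}$ from below.

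A telltale sign is that your argument never uses this hypothesis essentially. The use you do make of it in Step 2 is also not correct as stated. The bound $M(\Phi_c+p\theta)^{1+\alpha}$ is of order $\Phi_c^{1+\alpha}$, so it is not $o(p\theta)$ uniformly in $t$: it exceeds $p\Phi_c$ wherever $\Phi_c^\alpha\gg p=p_0e^{-\mu t}$.

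In fact the tail in Step 2 is harmless without any KPP condition. There $|\Phi_c'(\zeta\mp q)|\ge c_1\theta(\zeta)$, so every $O(p\theta)$ term is absorbed by $q'|\Phi_c'|=Cp|\Phi_c'|$ once $C$ is large. Those terms come from the Lipschitz continuity of $f$, from $\theta''+c\theta'$, from $|\nabla_x v|^2(k\theta'+\theta'')$, and from $p'\theta$. This uses only $f\in C^1$, which is consistent with the paper applying Lemma~\ref{lem4} without the KPP-type assumption. So the ``main obstacle'' you identify is not the real one; the real one is the tail alignment at the restart time $T_\varepsilon$.
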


It is well-known that condition \eqref{assmpf} implies $c_*=2\sqrt{f'(0)}$, which means that there is no pushed front under condition \eqref{assmpf}. The purpose of this paper is to reveal the behavior of $\gamma(x,t)$ when $\Phi_c(x-ct)$ is a pushed front. First, we verify that the same assertion as in Theorem~B (i) holds 
when condition \eqref{assmpf} is dropped and conditions \eqref{lbidni} and \eqref{init2} are replaced with
\begin{equation}
\label{init1}
\liminf_{y\to-\infty}\inf_{x\in\mathbf{R}^{n-1}}u_{0}(x,y)>0 \\
\end{equation}
and
\begin{equation}
\label{init3}
c=c_*>2\sqrt{f'(0)}, \qquad 
\limsup_{y\to\infty}\sup_{x\in\mathbf{R}^{n-1}} u_{0}(x,y)e^{-\lambda_1 y}<\infty \ \mbox{ for some } \ \lambda_1<\lambda_+,
\end{equation}
respectively. We then prove that the behavior of $\gamma(x,t)$ is governed by the mean curvature flow with a drift term, as in the case where $f$ is of bistable type. More precisely, our main result is stated as follows.


\begin{thm}\label{thm1}
Assume (F), \eqref{init1} and \eqref{init3} and let $u$ be a solution of \eqref{Rea-Diff}. Then there exists a smooth function  $\gamma=\gamma(x,t)$ with the following properties.
\begin{enumerate}
\item[(i)] 
There exists $T>0$ such that 
\begin{equation*}
\{ (x,y,t) \in \mathbf{R}^{n-1} \times \mathbf{R} \times [T,\infty); \, u(x,y,t)=\Phi_c(0)\}=\{y=\gamma(x,t)\}.
\end{equation*}
Moreover, it holds that
\begin{equation*}
\lim_{t\rightarrow\infty}\sup_{(x,y)\in\mathbf{R}^n }|u(x,y,t)-\Phi_c(y-\gamma(x,t))|=0.
\end{equation*}
\item[(ii)]
For any $\varepsilon>0$, there exists $\tau_{\varepsilon}\in[T,\infty)$ such that the solution $U(x,t)$ of the problem
\begin{equation}\label{MCF}
\left\{
\begin{aligned}
&\frac{U_t}{\sqrt{1+|\nabla_x U|^2}}=\mathrm{div}\left(\frac{\nabla_x U}{\sqrt{1+|\nabla_x U|}}\right)+c_*,&&x\in\mathbf{R}^{n-1},t>0,\\
&U(x,0)=\gamma(x,\tau_{\varepsilon}),&&x\in\mathbf{R}^{n-1},
\end{aligned}
\right.
\end{equation}
satisfies
\begin{equation*}
\sup_{x\in\mathbf{R}^{n-1},t\ge\tau_{\varepsilon}}|\gamma(x,t)-U(x,t-\tau_{\varepsilon})|\le\varepsilon.
\end{equation*}
\end{enumerate}
Furthermore, the assertion (i) still holds if \eqref{init2} is assumed instead of \eqref{init3}.
\end{thm}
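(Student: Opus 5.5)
We will follow the strategy of Matano--Nara for Theorem~A, replacing the bistable ingredients by sub- and supersolutions adapted to the pushed front. The key structural fact is this. Because $c_*>2\sqrt{f'(0)}$, the pushed front decays at the fast rate $\lambda_-(c_*)$, and it is exponentially stable in a weighted space with weight $e^{\mu z}$, where $\lambda_+(c_*)<\mu<\lambda_-(c_*)$ in absolute-value ordering.

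\emph{Step 1 (sandwich by one-dimensional fronts).} Using \eqref{init1} and \eqref{init3}, we construct a subsolution of Fife--McLeod type
\[
u^-(y,t)=\max\{\Phi_{c_*}(y-c_*t+\xi_1-q(1-e^{-\beta t}))-q e^{-\beta t},0\}
\]
after a finite waiting time. The waiting time is chosen so that $u$ first exceeds $1-\delta$ on a half space $y<-L$, which is possible by \eqref{init1} and hair-trigger spreading in one dimension. The matching supersolution is
\[
u^+=\min\{1,\Phi_{c_*}(y-c_*t+\xi_2+q(1-e^{-\beta t}))+q e^{-\beta t}\,\psi(y-c_*t)\},
\]
where $\psi\sim e^{\lambda_1' z}$ for large $z$ with $\lambda_1\le\lambda_1'<\lambda_+$. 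This weight is what absorbs the initial tail $Ke^{\lambda_1 y}$. Because the pushed front decays faster than any $e^{\lambda z}$ with $\lambda\in(\lambda_-,\lambda_+)$, the linearized operator $\psi''+c_*\psi'+f'(\Phi)\psi$ can be made negative with a constant margin in the moving frame. This is the pushed-front substitute for the bistable monotonicity of $f$ near $0$, and it gives
\[
\Phi_{c_*}(y-c_*t+\xi_1)-o(1)\le u\le \Phi_{c_*}(y-c_*t+\xi_2)+o(1)
\]
uniformly in $x$. Under \eqref{init2} instead, the same barriers can be built with $\Phi_c$ itself serving as a comparison function, which is why (i) persists in that case.

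\emph{Step 2 (proof of (i)).} From Step 1, parabolic estimates, and the strong maximum principle applied to $u_y$, we show that $u_y<0$ in the region where $\delta\le u\le 1-\delta$ for large $t$. To do this, take any sequence $t_k\to\infty$ and any translates $x_k$; every limit entire solution lies between two shifts of the pushed front. By a Liouville-type rigidity result it must therefore equal $\Phi_{c_*}(y-c_*t+\xi)$; this is proved by the sliding method together with the exponential stability of pushed fronts. The implicit function theorem then yields a smooth $\gamma$ with $u=\Phi_{c_*}(0)$ exactly on $\{y=\gamma\}$. The same compactness argument gives $u-\Phi_{c_*}(y-\gamma)\to0$ uniformly, and also gives $\nabla_x\gamma\to0$ and $\gamma_t\to c_*$ together with all higher derivatives.

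\emph{Step 3 (proof of (ii)).} Here we construct refined sub- and supersolutions of the form
\[
w^\pm=\Phi_{c_*}\Big(\frac{y-U^\pm(x,t)}{\sqrt{1+|\nabla_xU^\pm|^2}}\mp q e^{-\beta t}\Big)\pm q e^{-\beta t}\psi,
\]
where $U^\pm$ solve \eqref{MCF} with data $\gamma(\cdot,\tau_\varepsilon)\pm\varepsilon/2$. Since $\nabla\gamma$ and $\nabla^2\gamma$ are small after $\tau_\varepsilon$, the solutions $U^\pm$ have small gradient and curvature for all time by MCF regularity estimates, and the residual error is bounded by $C|\nabla^2U|^2+\dots$. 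The defect introduced by curvature is controlled by the exponential weight. Comparing $u$ with $w^\pm$ traps $\gamma$ within $\varepsilon$ of $U$, using the continuous dependence of \eqref{MCF} on its initial data.

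\textbf{Main obstacle.} We expect the main obstacle to be Step 1 and the analogous control in Step 3: producing a supersolution that dominates the slowly decaying tail $e^{\lambda_1 y}$ ahead of the front. Near $u=0$ the nonlinearity is pushing ($f'(0)>0$), so the bistable trick of adding a small constant fails. Our first attempt will be the weighted perturbation $e^{\lambda' z}$ with $\lambda'\in(\lambda_1,\lambda_+)$, checking that $\lambda'^2+c_*\lambda'+f'(0)<0$. Near the front core this perturbation is dominated by $-\Phi'$, and we use the spectral gap of the pushed front in the weighted space to close the estimate.
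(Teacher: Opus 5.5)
Your Steps 1 and 2 follow the paper's architecture: Rothe-type barriers with a slowly decaying weight, $\omega$-limit points, the Berestycki--Hamel Liouville theorem, and the implicit function theorem. Step 3, however, has a genuine gap.

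When $\Phi_{c_*}(\eta)$ with $\eta=(z-U)/\sqrt{1+|\nabla_xU|^2}$ is inserted into the equation, the residual consists of terms such as $U_{x_ix_j}^2$, $U_{x_j}U_{x_ix_ix_j}$, $U_{x_i}U_{x_it}$ and $U_{x_i}U_{x_j}U_{x_ix_j}$, multiplied by $\Phi_{c_*}'$, $\eta\Phi_{c_*}'$ or $\eta^2\Phi_{c_*}''$. Smallness of $\nabla_x\gamma$ and $\nabla_x^2\gamma$ at $\tau_\varepsilon$ makes this residual uniformly small. In general, though, it decays only algebraically in time, like $t^{-2}$ (for step-like data $|\nabla_xU|\sim t^{-1/2}$ and $|\nabla_x^2U|\sim t^{-1}$), never exponentially. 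A correction of size $qe^{-\beta t}$ cannot dominate a residual of order $t^{-2}$ for large $t$, so $\mathrm{L}[w^+]\ge0$ fails.

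There is a second problem in the same construction. Your shift $\mp qe^{-\beta t}$ inside $\Phi_{c_*}$ shrinks in time, so the contribution of its time derivative, which is what absorbs the error in the front core, has the wrong sign. The shift must accumulate.

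The missing idea, taken from Matano--Nara, has two parts. Take the amplitude comparable to the algebraic residual bound: $P(t)\le Kp(t)\le 2P(t)$ with $P(t)=\min\{C_2t^{-2},C_1\}$ and $|p'|/p$ small. Take the shift to be $q(t)=C_0\int_0^t p(s)\,ds$. Integrability of $t^{-2}$, together with $C_1\to0$ as $\|\nabla_xV_0\|_{W^{1,\infty}}\to0$, keeps $q\le\varepsilon$.

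This requires quantitative decay rates for derivatives of the interface function. That is why the paper does not build the barriers from $U$. It uses the solution $V$ of $V_t=\Delta_xV+\frac{c_*}{2}|\nabla_xV|^2+c_*$, whose decay estimates (Lemma~\ref{lem2}) follow from the Hopf--Cole transformation. It proves that the level set stays within $\varepsilon$ of $V$ (Lemma~\ref{lem15}), and only then passes to $U$ via Lemma~\ref{lem1}. General MCF regularity gives smallness of $\nabla_xU$ and $\nabla_x^2U$ but not these rates, so on your route you would have to prove them yourself.

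A smaller flaw is in Step 1. The subsolution $\max\{\Phi_{c_*}(\cdot)-qe^{-\beta t},0\}$ fails for exactly the reason you give for the supersolution. At the leading edge, where $qe^{-\beta t}<\Phi_{c_*}\ll1$, one has $f(\Phi_{c_*})-f(\Phi_{c_*}-qe^{-\beta t})\approx f'(0)qe^{-\beta t}>0$. Meanwhile $|\Phi_{c_*}'|\approx|\lambda_-|\Phi_{c_*}$ is too small there for a shift term of fixed amplitude to compensate. Rothe's subsolution (Lemma~\ref{B}) instead subtracts $q_0e^{-\beta t}\psi$ with the same weight, using $\lambda_1^2+c_*\lambda_1+f'(0)<0$ and the linear extension \eqref{condf} of $f$ below $0$.

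In addition, in both of your barriers the shift $q(1-e^{-\beta t})$ has the wrong sign: the supersolution's front must advance and the subsolution's must retreat. It also needs a large prefactor. These are routine repairs, but as written Step 1 does not yield the lower bound.
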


We prove Theorem~\ref{thm1} by constructing appropriate comparison functions. They are given in the form
\begin{equation*}
u^{\pm}(x,y,t)\coloneqq\Phi_{c^{*}}\left(\frac{y-V(x,t)}{\sqrt{1+|\nabla_{x}V|^{2}}}\mp q(t)\right)\pm p(t)\chi \left( e^{\lambda (y-c_*t)}\right),
\end{equation*}
where $V$ is a solution of the equation
\begin{equation*}
V_{t}=\Delta_{x}V+\frac{c^{*}}{2}|\nabla_{x} V|^2+c_*,
\end{equation*}
$\lambda$ is a number with $\lambda<\lambda_1$
and $\chi$ is a smooth function safisfying $\chi(s)=s$ ($s \le 1/2$) and $\chi(s)=1$ ($s \ge 1$).
We will show that $u^+$ (resp. $u^-$) becomes a supersolution (resp. a subsolution) for the problem \eqref{Rea-Diff} if $p(t)$, $q(t)$ and the initial data for $V(x,t)$ are chosen appropriately. Theorem~\ref{thm1} (ii) is then proved by using these comparison functions and applying the fact that the solution $U$ of \eqref{MCF} is approximated by $V$. 

This paper is organized as follows. In Section 2, we recall results on the approximation of the mean curvature flow obtained in \cite{MR2837694}. Section 3 establishes upper and lower bounds of solutions at large time. In Section 4, we define $\omega$-limit points and provide their characterization. Section 5 establishes smoothness of level sets of solutions. Section 6 is devoted to the construction of comprison functions, which are used to prove Theorem \ref{thm1} (ii). In Section 7, we prove Theorem \ref{thm1}.

\renewcommand{\theenumi}{\roman{enumi}}
\renewcommand{\labelenumi}{(\theenumi)}

\section{Approximation of mean curvature flow}
 In this section, we present the following lemma, which shows that the mean curvature flow can be approximated by a semilinear equation under certain circumstances. The lemmas in this section are same as \cite{MR2837694}, so we omit their proof.
\begin{lem}[Approximation of the mean curvature flow, \cite{MR2837694}]\label{lem1}
Let $U(x,t;\phi)$ and $V(x,t;\phi)$ denote the solutions of equations
\begin{equation*}
\frac{U_{t}}{\sqrt{1+|\nabla_{x}U|^{2}}}=\mbox{div}\left(\frac{\nabla_{x}U}{\sqrt{1+|\nabla_{x}U|^{2}}}\right)+c,\quad x\in\mathbb{R}^{m},t>0,
\end{equation*}
\begin{equation*}
V_{t}=\Delta_{x}V+\frac{c}{2}|\nabla_{x}V|^{2}+c,\quad x\in\mathbb{R}^{m},t>0,
\end{equation*}
under the initial conditions $U(\cdot,0)=V(\cdot,0)=\phi\in W^{2,\infty}(\mathbb{R}^{m})$. Then, for any constant $\varepsilon>0$, there exists a constant $\delta>0$ such that if $\|\nabla_{x}\phi\|_{W^{1,\infty}}\le\delta$, it holds that 
\begin{equation*}
\sup_{x\in\mathbb{R}^{m}}|U(x,t;\phi)-V(x,t;\phi)|\le\varepsilon\quad for\; all\quad t\ge0.
\end{equation*}
\end{lem}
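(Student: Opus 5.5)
Although the paper cites this lemma from \cite{MR2837694}, here is how I would prove it. The idea is to compare $U$ and $V$ through the Hopf--Cole type structure of the $V$-equation, using that all derivatives stay small for all time when $\|\nabla_x\phi\|_{W^{1,\infty}}$ is small.

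\textbf{Step 1 (gradient and Hessian bounds).} Both equations are invariant under adding constants to the solution. Differentiating in $x$, $\nabla_x U$ and $\nabla_x V$ satisfy parabolic systems with a maximum principle. Hence $\|\nabla_x U(\cdot,t)\|_\infty\le\|\nabla_x\phi\|_\infty$, and likewise for $V$. For $V$ this follows from the substitution $w=e^{cV/2}$, which gives $w_t=\Delta w+\frac{c^2}{2}w$. For $U$ it follows from the standard gradient estimate for graphical mean curvature flow with constant forcing. Parabolic regularity combined with smallness of $\nabla^2\phi$ then gives
\[
\|\nabla_x^2 U(\cdot,t)\|_\infty+\|\nabla_x^2 V(\cdot,t)\|_\infty\le C\delta .
\]
Moreover, interior estimates on unit time intervals give the decay
\[
\|\nabla^2 U\|_\infty,\ \|\nabla^2 V\|_\infty\le C\delta\,\min\{1,t^{-1/2}\},\qquad \|\nabla^3 U\|_\infty\le C\delta\min\{1,t^{-1}\}.
\]

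\textbf{Step 2 (expand the $U$-equation).} Write $p=\nabla_x U$. Then
\[
U_t=\Delta U-\frac{p^{\mathsf T}\nabla^2U\,p}{1+|p|^2}+c\sqrt{1+|p|^2}.
\]
Since $\sqrt{1+|p|^2}=1+\tfrac12|p|^2+O(|p|^4)$, we get
\[
U_t=\Delta U+\tfrac c2|\nabla U|^2+c+E,\qquad |E|\le C\bigl(|p|^2|\nabla^2U|+|p|^4\bigr).
\]

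\textbf{Step 3 (comparison in the linearized variables).} Set $w_U=e^{cU/2}$ and $w_V=e^{cV/2}$. Then $w_V$ solves the linear equation $w_t=\Delta w+\frac{c^2}{2}w$, and $w_U$ solves the same equation with a forcing term $\frac c2Ew_U$. Duhamel's formula in the log variable gives
\[
|U-V|(t)\le\int_0^t\|E(s)\|_\infty\,ds .
\]
This integral must be bounded uniformly in $t$.

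\textbf{Main obstacle.} The $|p|^4$ term is not integrable in time unless $|p|$ decays, and for general $\phi$ the gradient need not decay (e.g.\ planar initial data with slope $\delta$). I would handle this in two ways.
\begin{itemize}
\item First, note that tilted planes and other exact solutions are matched exactly after a constant-speed correction. A plane of slope $a$ moves at speed $c\sqrt{1+a^2}$ under $U$ but at speed $c+\tfrac c2 a^2$ under $V$. So the leading error must come only from the nonlinear averaging of $|p|^2$.
\item Second, and more robustly, exploit the decay of the higher derivatives. The nonconstant part of $\nabla U$ smooths out at rate $t^{-1/2}$. Decompose $E$ into a part controlled by $|\nabla^2U|\cdot|p|^2\lesssim\delta^3 t^{-1/2}$, plus the quartic term.
\end{itemize}

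For the quartic term, I expect the actual argument in \cite{MR2837694} either restricts the error to $|p|^4$, which is $O(\delta^4)$ per unit time, and then uses a time-uniform comparison with sub/supersolutions of the form $V\pm\varepsilon$, or exploits that both flows are exactly solvable for linear data. I would try to construct barriers $V(x,t)\pm(\varepsilon/2)\bigl(1-e^{-t}\bigr)\pm\dots$, verifying that the residual has a sign using the Step 1 decay estimates. This decay-plus-barrier step is where the real work lies.
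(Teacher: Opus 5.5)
There is a genuine gap. Your expansion in Step~2 is correct, and the Hopf--Cole comparison in Step~3 does give $|U-V|(t)\le\int_0^t\|E(s)\|_\infty\,ds$. But the argument is never closed, and the obstacle you name comes from overlooking part of the hypothesis: $\phi\in W^{2,\infty}(\mathbb{R}^m)$ includes $\phi\in L^\infty$, so tilted planes are not admissible data. Your own plane computation shows that boundedness of $\phi$ \emph{must} be used. For slope $a$ the discrepancy $ct\bigl(1+\tfrac12|a|^2-\sqrt{1+|a|^2}\bigr)$ grows linearly in $t$, so no argument based only on $\|\nabla\phi\|_{W^{1,\infty}}$ can succeed. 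Nothing in your Step~1 uses $\phi\in L^\infty$. Those rates do not even make the cubic part of $E$ integrable, since $\delta^3t^{-1/2}\notin L^1(0,\infty)$. Barriers like $V\pm(\varepsilon/2)(1-e^{-t})$ also cannot absorb a residual that decays only algebraically. The paper omits the proof and defers to \cite{MR2837694}, but its Lemmas~\ref{lem2} and~\ref{lem13} contain the missing ingredients.

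The missing idea is the decay in Lemma~\ref{lem2}, which comes from $\|\phi\|_{L^\infty}\le M$. By Hopf--Cole, $e^{cV/2}=e^{c^2t/2}\,G_t*e^{c\phi/2}$ with $e^{c\phi/2}$ pinched between positive constants. Heat-kernel bounds then give $|\nabla V|\le C_0t^{-1/2}$ and $|D^2V|\le C_0t^{-1}$ with $C_0=C_0(c,M)$. Smallness of $\nabla\phi$ in $W^{1,\infty}$ keeps $|\nabla V|$ and $|D^2V|$ uniformly small for all $t$. It is easier to insert $V$ into the $U$-equation than $U$ into the $V$-equation, since decay of the derivatives of $U$ would need its own proof. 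The residual is
\[
\rho:=V_t-\sqrt{1+|\nabla V|^2}\,\mathrm{div}\Bigl(\frac{\nabla V}{\sqrt{1+|\nabla V|^2}}\Bigr)-c\sqrt{1+|\nabla V|^2}
=\frac{c}{2}\,\frac{|\nabla V|^4}{\bigl(1+\sqrt{1+|\nabla V|^2}\bigr)^2}+\frac{\langle D^2V\,\nabla V,\nabla V\rangle}{1+|\nabla V|^2},
\]
which is essentially the quantity $I_0$ of Lemma~\ref{lem13}. Hence $\|\rho(t)\|_\infty\le\min\{\eta,Kt^{-2}\}$ with $\eta\to0$ as $\delta\to0$ and $K=K(c,M)$. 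The $U$-equation involves only derivatives of the unknown, so $V\pm\int_0^t\min\{\eta,Ks^{-2}\}\,ds$ are a super- and a subsolution with initial value $\phi$. Comparison then yields $\sup|U-V|\le\int_0^\infty\min\{\eta,Ks^{-2}\}\,ds=2\sqrt{\eta K}$, which is $\le\varepsilon$ for small $\delta$.

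This $\delta$ depends on $M$ as well as on $\varepsilon$, and that dependence cannot be removed. Take $c>0$ and the smoothed valley $\phi(x)=-\max\{0,A-\delta|x_1|\}$. Its bottom rises at speed $c\sqrt{1+\delta^2}$ under $U$ but $c(1+\delta^2/2)$ under $V$, for a time of order $A/(c\delta^2)$, which produces a discrepancy of order $A\delta^2$. So the lemma must be read with $\|\phi\|_{L^\infty}\le M$ and $\delta=\delta(\varepsilon,M)$. This is harmless in the application (Lemma~\ref{lem15} and Theorem~\ref{thm1}(ii)), where $\Gamma(\cdot,\tau)$ stays uniformly bounded.
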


\begin{lem}[\cite{MR2837694}]\label{lem2}
Let $V(x,t)$ be a solution to the problem
\begin{equation*}
\left\{
\begin{aligned}
&V_{t}=\Delta_{x}V+\frac{c}{2}|\nabla_{x}V|^{2}+c,\quad x\in\mathbb{R}^{m},t>0,\\
&V(x,0)=V_{0}(x),\hspace{54pt}x\in\mathbb{R}^{m}.
\end{aligned}
\right.
\end{equation*}
Then the following estimates hold:
\begin{align*}
\sup_{x\in\mathbb{R}^{m}}|V_{x_{i}}(x,t)|&\le\min\{C_{0}t^{-\frac{1}{2}},C_{1}\},\\
\sup_{x\in\mathbb{R}^{m}}|V_{x_{i}x_{j}}|&\le\min\{C_{0}t^{-1},C_{2}\},\\
\sup_{x\in\mathbb{R}^{m}}|V_{x_{i}x_{j}x_{k}}|&\le C_{3}(1+t)^{-\frac{3}{2}},\\
\sup_{x\in\mathbb{R}^{m}}|V_{x_{i}t}|&\le C_{4}(1+t)^{-\frac{3}{2}},
\end{align*}
for each $1\le i,j,k\le m$, where $C_{0},C_{1},C_{2},C_{3}$ and $C_{4}$ are positive constants such that
\begin{enumerate}
\item $C_{0}$ depends only on $c$ and $\|V_{0}\|_{L^{\infty}}$,
\item $C_{1}$ depends only on $c, \|V_{0}\|_{L^{\infty}}$ and satisfies
\begin{equation*}
C_{1}\to0\quad as\quad \|\nabla_{x}V_{0}\|_{L^{\infty}}\to0,
\end{equation*}
\item $C_{2}$ depends only on $c,\|V_{0}\|$ and $\|\nabla_{x}V_{0}\|_{L^{\infty}}$ and satisfies
\begin{equation*}
C_{2}\to0\quad as\|\nabla_{x}V_{0}\|_{W^{1,\infty}}\to0,
\end{equation*}
\item $C_{3}$ and $C_{4}$ depend only on $c$ and $\|V_{0}\|_{W^{3,\infty}}$.
\end{enumerate}
\end{lem}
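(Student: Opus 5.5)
The natural route is the Hopf--Cole transformation, which linearizes the equation for $V$ and reduces every estimate to bounds on heat-kernel convolutions. We may assume $c\neq0$; in this paper $c=c_*>0$, and for $c=0$ the equation is the heat equation and the same kernel estimates apply directly. Set $W:=e^{cV/2}$. Then
$$W_t=\tfrac{c}{2}V_tW,\qquad \Delta_xW=\tfrac{c}{2}W\bigl(\Delta_xV+\tfrac{c}{2}|\nabla_xV|^2\bigr),$$
so $W_t=\Delta_xW+\tfrac{c^2}{2}W$. Writing $G_t$ for the heat kernel on $\mathbb{R}^m$ and $w_0:=e^{cV_0/2}$, this gives the explicit formula
$$V(x,t)=ct+\frac{2}{c}\log\bigl(G_t*w_0\bigr)(x).$$
The key structural fact is that $w_0$ is pinched between the positive constants $e^{-|c|\|V_0\|_{L^\infty}/2}$ and $e^{|c|\|V_0\|_{L^\infty}/2}$. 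Hence the denominator $G_t*w_0$ is bounded below uniformly in $t$ by a constant depending only on $c$ and $\|V_0\|_{L^\infty}$.

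\emph{First derivatives.} We have $V_{x_i}=\frac{2}{c}\,(\partial_iG_t*w_0)/(G_t*w_0)$. Using $\|\partial_iG_t\|_{L^1}\le Ct^{-1/2}$ gives the bound $C_0t^{-1/2}$. Alternatively, move the derivative onto the data: $\partial_i(G_t*w_0)=G_t*\bigl(\tfrac{c}{2}V_{0,x_i}w_0\bigr)$. This gives a bound proportional to $\|\nabla_xV_0\|_{L^\infty}$, which yields $C_1$ with $C_1\to0$ as $\|\nabla_xV_0\|_{L^\infty}\to0$.

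\emph{Second derivatives.} The quotient rule gives
$$V_{x_ix_j}=\frac{2}{c}\left[\frac{\partial_{ij}(G_t*w_0)}{G_t*w_0}-\frac{\partial_i(G_t*w_0)\,\partial_j(G_t*w_0)}{(G_t*w_0)^2}\right].$$
For the $t^{-1}$ rate, put both derivatives on the kernel, using $\|\partial_{ij}G_t\|_{L^1}\le Ct^{-1}$. For the uniform constant, put both derivatives on $w_0$. The result involves $V_{0,x_ix_j}$ and $V_{0,x_i}V_{0,x_j}$, so $C_2\to0$ as $\|\nabla_xV_0\|_{W^{1,\infty}}\to0$.

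\emph{Third derivatives.} Differentiating the quotient once more produces terms of the form $\partial^\alpha(G_t*w_0)$ with $|\alpha|\le3$, divided by powers of $G_t*w_0$.
\begin{itemize}
\item For $t\ge1$, put all derivatives on the kernel. A term with derivative orders summing to $3$ is then of order $t^{-3/2}$.
\item For $t\le1$, put all derivatives on $w_0$. This is bounded by a constant depending on $\|V_0\|_{W^{3,\infty}}$.
\end{itemize}
Together these give $C_3(1+t)^{-3/2}$.

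\emph{Mixed derivative $V_{x_it}$.} Differentiate the equation: $V_{x_it}=\partial_i\Delta_xV+c\sum_jV_{x_j}V_{x_jx_i}$. The first term is bounded by the third-derivative estimate. For the second term, use the product of the first- and second-derivative bounds: for $t\ge1$ it is $O(t^{-1/2}\cdot t^{-1})$, and for $t\le1$ it is bounded by $C_1C_2$. This gives $C_4(1+t)^{-3/2}$.

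The only delicate point is bookkeeping. When derivatives are shifted onto $w_0$, the resulting constants must depend only on the quantities listed in (i)--(iv). In particular, the smallness of $C_1$ and $C_2$ must come entirely from factors of $\nabla_xV_0$ and $\nabla_x^2V_0$ produced by differentiating $e^{cV_0/2}$. The denominators involve only $\|V_0\|_{L^\infty}$ and so cause no difficulty.
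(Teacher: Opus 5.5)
Your proposal is correct, and it is essentially the argument behind this lemma. The paper omits the proof and defers to Matano--Nara, where the estimates likewise come from the Cole--Hopf linearization $e^{cV/2}$ and heat-kernel bounds: derivatives go onto the kernel for the decay rates and onto $e^{cV_0/2}$ for the small constants $C_1$, $C_2$ and the short-time bounds. Your bookkeeping also makes $C_1$ depend on $\|\nabla_x V_0\|_{L^\infty}$ and $C_2$ on $\|\nabla_x V_0\|_{W^{1,\infty}}$, which letting $t\to0$ shows is unavoidable; this is the right reading of the loosely stated dependences (ii)--(iii).
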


\section{Upper and lower bounds of solutions}

Throughout this section, we always assume that $u_0$ satisfies the conditions in Theorem~\ref{thm1}.
Without loss of generality, we may assume that 
\begin{equation}\label{condf}
f(u)=f^{\prime}(0)u \quad (u<0),
\end{equation}
because the nonnegativity of $u_{0}$ and the maximum principle show that the solution $u$ of \eqref{Rea-Diff} is nonnegative. 

We introduce a moving frame. Write
\begin{equation*}
\label{Mov}
z=y-ct.
\end{equation*}
Then \eqref{Rea-Diff} is transformed into
\begin{equation}\label{MRea-Diff}
\left\{
\begin{aligned}
&u_{t}=\Delta u+cu_{z}+f(u),\quad x\in\mathbb{R}^{n-1}, \, z\in\mathbb{R}, \, t>0, \\
&u(x,y,0)=u_{0}(x,y), \hspace{20pt}  x\in\mathbb{R}^{n-1}, \, z\in\mathbb{R},
\end{aligned}
\right.
\end{equation} 
where $\Delta=\partial^{2}/\partial x_{1}^{2}+\cdots+\partial^{2}/\partial x_{n-1}^{2}+\partial^{2}/\partial z^{2}$. 

In this section, we establish the following estimates.
\begin{prop}\label{lem3}
Let $u(x,z,t)$ be a solution of \eqref{MRea-Diff}. 
If the initial data $u_{0}$ is satisfies \eqref{init1} and \eqref{init3},
then there exists constants $z_{1},z_{2}\in\mathbb{R}$ such that
\begin{equation}
\label{upper}
\limsup_{t\to\infty}\sup_{x\in\mathbb{R}^{n-1}}u(x,z,t)\le\Phi_{c}(z-z_{0})\;\mbox{uniformly in }z\in\mathbb{R}
\end{equation}
\begin{equation}
\label{lower}
\liminf_{t\to\infty}\inf_{x\in\mathbb{R}^{n-1}}u(x,z,t)\ge\Phi_{c}(z-z_{1})\;\mbox{uniformly in }z\in\mathbb{R}
\end{equation}
The same inequalities hold if \eqref{init2} is assumed instead of \eqref{init3}.
\end{prop}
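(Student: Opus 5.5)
The plan is a comparison argument with one-dimensional super- and subsolutions of Fife--McLeod type, following Stokes and Rothe. Since both bounds are uniform in $x$, it suffices to squeeze $u$ between solutions of the one-dimensional problem in the moving frame $z=y-ct$, $c=c^*$. These are $x$-independent functions, so they are sub/supersolutions of \eqref{MRea-Diff} as well.

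\textbf{Upper bound.} Set $\bar u_0(z)=\min\{\max\{1,\|u_0\|_\infty\},\,Ke^{\lambda_1 z}\}$. By \eqref{init3} we may take $K$ large with $u_0\le \bar u_0$. Then I would construct
\begin{equation*}
w^+(z,t)=\Phi_{c}(z-z_0+\xi(t))+p(t)\chi\bigl(e^{\lambda z}\bigr),
\end{equation*}
with $\lambda\le\lambda_1$ chosen so that $\lambda_-(c)<\lambda<\lambda_+(c)$ (shrinking $\lambda_1$ if necessary), hence $\lambda^2+c\lambda+f'(0)<0$. Take $p(t)=p_0e^{-\mu t}$ and $\xi(t)=\xi_0(1-e^{-\mu t})$.

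The supersolution check is done in three regions. For $z$ large, $\Phi_c$ and $p$ are small, so $f(w^+)\approx f'(0)w^+$. The term $p e^{\lambda z}$ then gives $\mathcal L[pe^{\lambda z}]=-(\mu+\lambda^2+c\lambda+f'(0))pe^{\lambda z}$ up to higher order. This is positive for $\mu$ small, because the quadratic $\lambda^2+c\lambda+f'(0)$ is negative for $\lambda$ strictly between the roots. For $z$ very negative, $\chi\equiv1$ and $f'(1)<0$ absorbs $-p'$. On the compact middle region, $\Phi'_c<0$ is bounded away from $0$, and the term $\xi'(t)\Phi'_c$ is chosen to dominate the $O(p)$ errors. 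This requires $\xi_0$ large relative to $p_0$.

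At $t=0$, $w^+\ge \bar u_0$ holds by choosing $p_0$ large and $z_0$ very negative. The reason is that $\Phi_c(z)\sim\alpha z e^{\lambda_- z}$ decays faster than $e^{\lambda z}$, so the tail is handled by the $p\chi$ term, and $\chi=1$ for $z\ge0$ handles the large values. A technical point is that $p_0$ is then large, so the linearization argument fails initially. To fix this I would first run the solution for a time $T_0$. By Aronson--Weinberger type bounds, $u\le 1+\varepsilon$ eventually, and $u(\cdot,T_0)\le C e^{\lambda_1 z}$ persists (via the linear supersolution $\min(\cdot,\ Ce^{\lambda_1(y-\kappa t)})$). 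I would then start the comparison with small $p_0$ after shifting $z_0$ further. Letting $t\to\infty$ gives \eqref{upper} with $z_0-\xi_0$ in place of $z_0$.

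\textbf{Lower bound.} By \eqref{init1}, $u_0\ge\delta$ on $\{y\le -L\}$ for some $\delta,L$. I would first use a one-dimensional solution from Heaviside-type data $\delta\mathbf 1_{\{y<-L\}}$. A sub-front argument shows that by some time $T_1$ one has $u\ge 1-\eta$ for $z\le -L'$, uniformly in $x$. This holds because $f>0$ on $(0,1)$ and spatially homogeneous data approach $1$. Then I would apply the subsolution
\begin{equation*}
w^-(z,t)=\Phi_c(z-z_1-\xi(t))-p(t)\chi\bigl(e^{\lambda z}\bigr),
\end{equation*}
with the same mechanism, truncated by $\max\{w^-,0\}$, which remains a subsolution. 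Here $p$ absorbs the gap $\eta$ near $-\infty$, and $\Phi_c(z-z_1)$ lies below $u(\cdot,T_1)$ by shifting $z_1$ very negative. Nonnegativity of $u$ covers the region where $w^-\le 0$.

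\textbf{Main obstacle and the \eqref{init2} case.} The main obstacle is the upper bound's tail region. That is exactly where pushedness ($\lambda_-<\lambda_1<\lambda_+$) is used, together with the need to control the initially large perturbation. For \eqref{init2} with $c=c^*$, the same $e^{\lambda z}$ control holds, since $\Phi_{c^*}\le Ce^{\lambda z}$ for any $\lambda<\lambda_-$ slightly. In the pulled case, or for $c>c^*$, I would instead use $\Phi_c$ itself with the Wang-type supersolution. I would need to check that this case does not require \eqref{assmpf}, and I expect it to be handled by comparison with the front $\Phi_c(z-z_0)$ shifted by a bounded amount.
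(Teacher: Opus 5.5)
Under \eqref{init1} and \eqref{init3} your plan follows the paper's route. You use Rothe-type barriers $\Phi_{c^*}(z\mp\xi(t))\pm p(t)\chi(\cdot)$, and for the upper bound you first wait until $u\le 1+\varepsilon$ while an exponential supersolution controls the tail. For the lower bound the paper applies Rothe's subsolution directly with $q_0$ close to $1$; your Heaviside/hair-trigger pre-step is what the paper does in the \eqref{init2} case, and it also works here.

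The genuine gap is the last sentence of the statement. You leave \eqref{init2} with $c>c^*$ as an expectation, and your mechanism cannot cover it. There $\Phi_c(s)\sim\alpha e^{\lambda_+(c)s}$ with $\alpha>0$, while every admissible exponent satisfies $\lambda<\lambda_+(c)$. So $\Phi_c(z-z_1)\le p\,\chi(e^{\lambda z})$ fails for large $z$, and $\max\{w^-,0\}$ lies below $u(\cdot,T_1)$ in the far field only if you already know $u(\cdot,T_1)\gtrsim\Phi_c$ there. Symmetrically, the upper barrier needs $u(\cdot,T_0)\lesssim\Phi_c$ in the far field. What is missing is a way to carry the two-sided tail information $u_0\asymp\Phi_c$ through the waiting period. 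The paper does this with $(1\pm e^{-(z-at)})\Phi_c(z-z_0)$, which are super/subsolutions for large $a$ (Lemma~\ref{lem5}) and equal $(1\pm o(1))\Phi_c(z-z_0)$ for $z\gg aT$. The factor is absorbed into a shift via \eqref{asym}. Combined with hair-trigger on the left and positivity in between, this gives $(1-\varepsilon_0)\Phi_c(z-z_-)\le u(\cdot,T)\le(1+\varepsilon_0)\Phi_c(z-z_+)$ for some $z_\pm$. That is exactly the basin of Wang's multiplicative barriers (Lemma~\ref{lem4}). Those rely only on (F), so \eqref{assmpf} is not needed.

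Two points in your \eqref{init3} upper bound also need repair.

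First, restarting at $T_0$ by shifting only the front fails when $u_0$ exceeds $1$. You only know $u(\cdot,T_0)\le\min\{1+\varepsilon,Ce^{\lambda_1 z}\}$, which does not exclude $u>1$ up to $z^*=-\lambda_1^{-1}\ln C$. There $p_0\chi(e^{\lambda z})\approx p_0C^{-\lambda/\lambda_1}$ need not dominate $\varepsilon$, since the coefficient from your exponential barrier may grow with $T_0$. Since $\Phi_c<1$, the front cannot help. You have to move the cutoff of $\chi$ independently of the front, i.e.\ use $p(t)\chi(e^{\lambda(z-z_2)})$ with $z_2$ beyond the point where the tail bound falls below $p_0$. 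This is exactly Rothe's $q_0e^{-\beta t}\psi(z-z_2)$, with constants depending on $z_1,z_2$. By translation invariance, it amounts to verifying your barrier for every offset between front and cutoff.

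Second, several signs are reversed. Since $\Phi_c'<0$, $\partial_t\Phi_c(z-z_0+\xi(t))=\xi'\Phi_c'<0$ has the wrong sign for a supersolution, so the front must move right in $w^+$ and left in $w^-$. Since all exponents are negative, dominating $Ke^{\lambda_1 z}$ requires $\lambda\ge\lambda_1$, not $\lambda\le\lambda_1$; this is where $\lambda_1<\lambda_+$ is used. Likewise you need $\lambda\ge\lambda_-$ in your reduction of the case $c=c^*$.
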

We split the proof of this proposition into the case 1 and case 2:
\begin{enumerate}
\item[1:]\eqref{init1} and \eqref{init3}
\item[2:]\eqref{init1} and \eqref{init2}
\end{enumerate}
First, we show upper and lower estimates for case 1.
For this, we recall the lemmas in \cite{MR639447}. We take any $\lambda<\lambda_{1}<\lambda_{+}$ and define $\psi(s)$ as
\begin{equation*}
\psi(s)=\chi(e^{\lambda_{1}s})
\end{equation*}
\begin{equation*}
\chi(s)\coloneqq\left\{
\begin{aligned}
1\;(s\ge1)\\
s\;(s\le\frac{1}{2})
\end{aligned}
\right.
\end{equation*}
, where $0\le\chi(s)\le1$ for $s\in(\frac{1}{2},1)$. For these, the following lemmas hold
\begin{lem}[\cite{MR639447}]\label{add_B}
There exists $p\in(0,1)$ such that, for any $q_{0}\in(0,p],z_{1},z_{2}\in\mathbf{R}$, there exists $\beta>0,C>0$ such that
\begin{equation*}
w^{+}(z,t)\coloneqq \Phi_{c^{*}}(z-z_{1}- C(1-e^{-\beta t}))+q_{0}e^{-\beta t}\psi(z-z_{2})
\end{equation*}
satisfies $L[w^+]\ge0$.
\end{lem}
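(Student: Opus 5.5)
Throughout, $L[w]:=w_t-w_{zz}-c^*w_z-f(w)$ is the operator of \eqref{MRea-Diff} in one space dimension, and $c=c^*$. Write $\xi:=z-z_1-C(1-e^{-\beta t})$ and $q(t):=q_0e^{-\beta t}$. Since $\Phi_{c^*}''+c^*\Phi_{c^*}'+f(\Phi_{c^*})=0$, a direct computation gives
\begin{equation*}
L[w^+]=-C\beta e^{-\beta t}\Phi_{c^*}'(\xi)+q(t)\Bigl(-\beta\psi-\psi''-c^*\psi'\Bigr)-\Bigl(f(\Phi_{c^*}(\xi)+q\psi)-f(\Phi_{c^*}(\xi))\Bigr),
\end{equation*}
where $\psi$ and its derivatives are evaluated at $z-z_2$. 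Because $\Phi_{c^*}'<0$, the first term is nonnegative and becomes large when $C$ is large. The plan is to split the $(z,t)$-domain according to the value of $\xi$ and check $L[w^+]\ge0$ on each piece.

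First I fix $p$ and the structural constants. Both roots $\lambda_\pm$ are negative. Replacing $\lambda_1$ by a number closer to $\lambda_+$ only weakens the decay hypothesis, so I may assume $\lambda_-<\lambda_1<\lambda_+$. Then $\lambda_1^2+c^*\lambda_1+f'(0)=-\kappa<0$ for some $\kappa>0$. Hence on the set where $\psi=e^{\lambda_1 s}$,
\begin{equation*}
-\psi''-c^*\psi'=(f'(0)+\kappa)\psi .
\end{equation*}
Everywhere else, since $\chi$ is smooth with bounded derivatives, $|\psi''+c^*\psi'|\le K_0$ for some constant $K_0$. Using $f'(1)<0$ and continuity of $f'$, I choose $\mu>0$ and $p$ small so that $f'\le-\mu$ on $[1-2\eta,1+p]$. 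I also require $f'(s)\le f'(0)+\kappa/2$ on $[0,2\eta]$, for a small $\eta$ fixed below.

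Next come the three regions in $\xi$. Fix $M$ so large that $\Phi_{c^*}(\xi)\le\eta$ for $\xi\ge M$ and $\Phi_{c^*}(\xi)\ge1-\eta$ for $\xi\le-M$.
\begin{itemize}
\item[(a)] \emph{Near the front, $|\xi|\le M$.} Here $-\Phi_{c^*}'(\xi)\ge\delta_M>0$. All other terms are bounded by $q_0e^{-\beta t}(\beta+K_0+f'(0)+\kappa+\|f'\|_\infty)$. So $L[w^+]\ge0$ as soon as $C\ge K_1q_0/(\beta\delta_M)$, which fixes $C$ once $\beta$ is fixed. This is exactly why $C$ is allowed to depend on the data.
\item[(b)] \emph{Ahead of the front, $\xi\ge M$.} Here $\Phi_{c^*}\le\eta$. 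Where $\psi$ is exponential and small, the mean value theorem gives $f(\Phi+q\psi)-f(\Phi)\le(f'(0)+\kappa/2)q\psi$. Therefore
\begin{equation*}
L[w^+]\ge q\psi(\kappa/2-\beta)\ge0
\end{equation*}
for $\beta\le\kappa/2$.
\item[(c)] \emph{Behind the front, $\xi\le-M$.} Where $\psi\equiv1$, one has $\psi'=\psi''=0$ and $f(\Phi+q)-f(\Phi)\le-\mu q$, so $L[w^+]\ge q(\mu-\beta)\ge0$. Where $\psi$ is exponential, $-\psi''-c^*\psi'$ is positive and the $f$-difference is again $\le-\mu q\psi$, so the same bound holds.
\end{itemize}

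The main obstacle is the transition zone of $\chi$, namely $e^{\lambda_1(z-z_2)}\in(\tfrac12,1)$. This is a $z$-interval $I$ of fixed length, located near $z_2$. The difficulty is the case where $I$ overlaps region (b) or (c): there $\psi''+c^*\psi'$ has no favorable sign, and the $f$-term alone may not absorb it. In particular, where $\Phi_{c^*}\approx0$ and $\psi\approx1$, the reaction term contributes about $+f'(0)q$ against us.

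My first attempt is to use that on $I$ the shift satisfies $\xi\in[\min I-z_1-C,\ \max I-z_1]$, so $\xi$ is bounded above independently of $C$. The bad overlap with (b) therefore only occurs on a $\xi$-range bounded in terms of $z_2-z_1$. I would enlarge $M$ on the right to $M'=\max(M,\max I-z_1+1)$, which makes the overlap of $I$ with (b) empty, and then treat the compact range $\xi\in[-M,M']$ by the argument of (a), with $\delta_{M'}$ depending on $z_2-z_1$.

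For the overlap of $I$ with (c), I would instead choose $\chi$ (which is only constrained by $\chi(s)=s$ for $s\le\tfrac12$ and $\chi(s)=1$ for $s\ge1$) monotone and concave in the transition. Then $-\psi''-c^*\psi'\ge-K_0\psi$ with $K_0$ small relative to $\mu$. If that is not achievable, I would fall back on shrinking $p$ and using $f'\le-\mu$ together with the explicit form of $\psi$.

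Finally, $\beta\le\min(\kappa/2,\mu/2)$ is chosen first, and then $C$ is chosen from (a) depending on $q_0$, $z_1$ and $z_2$.
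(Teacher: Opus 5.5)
The paper does not prove this lemma (it is quoted from Rothe), so I am judging your argument on its own terms. Most of it is sound:
\begin{itemize}
\item the formula for $L[w^+]$ and the splitting in $\xi$;
\item the enlargement of $M$ to $M'$, which forces $\psi(z-z_2)=e^{\lambda_1(z-z_2)}$ whenever $\xi\ge M'$, so that both the $\psi\equiv1$ part and the transition part are removed from region (b);
\item the resulting dependence of $C$ on $q_0,z_1,z_2$.
\end{itemize}
You should also impose $p\le\eta$, so that $\Phi_{c^*}+q\psi\le2\eta$ in (b).

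The step that fails is the one you flagged: the transition zone of $\chi$ inside region (c). At such points $\xi$ tends to $z-z_1-C$. When $z_2-z_1$ is large this limit is very negative, because $C$ must then exceed roughly $q_0/|\Phi_{c^*}'(z_2-z_1)|$ (look at $z=z_2$, $t=0$). So for large $t$ the term $-C\beta e^{-\beta t}\Phi_{c^*}'(\xi)$ is negligible, and what remains is $q\bigl(-\beta\psi-\psi''-c^*\psi'\bigr)+q\psi\bigl(|f'(1)|+o(1)\bigr)$.

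Every term here is proportional to $q$, so your fallback of shrinking $p$ changes nothing. The inequality is a condition on $\chi$ alone. Indeed, if $\lambda_1^2r^2\chi''(r)>(f'(0)+\kappa)r\chi'(r)+|f'(1)|\chi(r)$ at some $r\in(\tfrac12,1)$, the lemma is actually false once $z_2-z_1$ is large.

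Your primary remedy does not exist either. A $C^1$ function that is concave on $[\tfrac12,1]$ with $\chi(\tfrac12)=\tfrac12$ and $\chi'(\tfrac12)=1$ satisfies $\chi(r)\le r$ there. Then $\chi(1)=1$ forces $\chi(r)=r$ on $[\tfrac12,1]$, which contradicts $\chi'(1)=0$.

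The fix is to require only a one-sided bound on $\chi''$. With $r=e^{\lambda_1 s}$ one has $-\psi''-c^*\psi'=-\lambda_1^2r^2\chi''(r)+(f'(0)+\kappa)\,r\chi'(r)$. Suppose $\chi'\ge0$ and $\chi''\le\epsilon$ on $(\tfrac12,1)$. Since $\psi\ge\tfrac12$ there, this gives $-\psi''-c^*\psi'\ge-2\lambda_1^2\epsilon\,\psi$. Region (c) then yields $L[w^+]\ge q\psi(\mu-\beta-2\lambda_1^2\epsilon)\ge0$ once $2\lambda_1^2\epsilon\le\mu/2$ and $\beta\le\mu/2$.

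Such a $\chi$ exists. Let $\chi'$ rise smoothly and slowly from $1$ to $1+a$ on $[\tfrac12,\tfrac34]$, then drop steeply and smoothly to $0$ on $[1-h,1]$. Fix $a=O(h)$ by the requirement $\int_{1/2}^1\chi'=\tfrac12$. The large negative $\chi''$ in the drop is harmless; it only enlarges the constant absorbed by $C$ in region (a). The positive part of $\chi''$ is $O(h)$.

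Alternatively, take $\chi(r)=\min\{r,1\}$. Then $w^+$ is the minimum of the two classical functions $\Phi_{c^*}(\xi)+qe^{\lambda_1(z-z_2)}$ and $\Phi_{c^*}(\xi)+q$. Each is a supersolution near the set where it realizes the minimum, which suffices for comparison. With $\chi$ chosen in either way, your proof is complete.
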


\begin{lem}[\cite{MR639447}]\label{B}
For any $q_{0}\in(0,1)$, there exists $\beta>0,C>0$ such that, for any $z_{1},z_{2}\in\mathbf{R}$, 
\begin{equation*}
w^{-}(z,t)\coloneqq \Phi_{c^{*}}(z-z_{1}+ C(1-e^{-\beta t}))-q_{0}e^{-\beta t}\psi(z-z_{2})
\end{equation*}
satisfies $L[w^-]\le0$.
\end{lem}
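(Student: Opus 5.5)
The plan is to verify $L[w^-]\le 0$ by direct computation, splitting $\mathbb{R}\times(0,\infty)$ according to the value of $\Phi_{c^*}$ at the shifted argument. Here $L[w]=w_t-w_{zz}-c^*w_z-f(w)$ is the one-dimensional operator in the moving frame of \eqref{MRea-Diff}. Two preliminary choices:
\begin{itemize}
\item Since increasing $\lambda_1$ towards $\lambda_+$ keeps \eqref{init3} valid, we may assume $\lambda_-<\lambda_1<\lambda_+$, so that $\lambda_1^2+c^*\lambda_1+f'(0)<0$.
\item We fix $\chi$ nondecreasing, so that $\psi\ge \tfrac12$ wherever $\psi\neq e^{\lambda_1 s}$.
\end{itemize}
Write $\xi=z-z_1+C(1-e^{-\beta t})$, $\delta(z,t)=q_0e^{-\beta t}\psi(z-z_2)$ and $\Phi=\Phi_{c^*}$. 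Using $\Phi''+c^*\Phi'+f(\Phi)=0$, one gets
\begin{equation*}
L[w^-]=C\beta e^{-\beta t}\Phi'(\xi)+q_0e^{-\beta t}\bigl(\beta\psi+\psi''+c^*\psi'\bigr)+f(\Phi(\xi))-f(\Phi(\xi)-\delta).
\end{equation*}
The first term is strictly negative because $\Phi'<0$. Since $0$ is a solution, it suffices to check $L[w^-]\le0$ where $w^->0$; the function $\max\{w^-,0\}$ is then a generalized subsolution.

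Fix a small $\eta>0$. By (F), $f'(1)<0$, and by \eqref{condf}, $f$ is linear below $0$. The $\beta$- and $C$-choices must not depend on $z_1,z_2$, and all constants below depend only on $f,q_0,\chi,\lambda_1$.
\begin{itemize}
\item \textbf{Region $\Phi(\xi)\ge1-\eta$.} If $\eta$ is small and $\delta\le q_0<1$, then $f(\Phi)-f(\Phi-\delta)\le -\tfrac12|f'(1)|\delta$ as long as $\Phi-\delta$ stays near $1$. Together with $\psi\ge\tfrac12$ or $\psi=e^{\lambda_1 s}$, this makes the second and third terms nonpositive once $\beta$ is small. (When $q_0$ is close to $1$ the argument $\Phi-\delta$ can leave the neighbourhood of $1$; I would then use the mean value theorem on the whole range and absorb the loss into the middle region.)
\item \textbf{Middle region $\eta\le\Phi(\xi)\le1-\eta$.} Here $\Phi'(\xi)\le-\kappa_\eta<0$, while the other two terms are bounded by $K q_0e^{-\beta t}$ with $K=\beta\|\psi\|_\infty+\|\psi''+c^*\psi'\|_\infty+\|f'\|_{L^\infty[-1,1]}$. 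Choosing $C\ge K q_0/(\beta\kappa_\eta)$ makes $L[w^-]\le0$.
\item \textbf{Region $\Phi(\xi)\le\eta$, with $\psi=e^{\lambda_1(z-z_2)}$.} Both $\Phi$ and $\Phi-\delta$ lie in $[-1,\eta]$, so $f(\Phi)-f(\Phi-\delta)\le (f'(0)+o_\eta(1))\delta$. The bracket becomes $q_0e^{-\beta t}\psi\bigl(\beta+\lambda_1^2+c^*\lambda_1+f'(0)+o_\eta(1)\bigr)$, which is negative for $\beta,\eta$ small.
\end{itemize}

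The main obstacle is the remaining case: $\Phi(\xi)\le\eta$ while $z-z_2$ lies in the range where $\psi\ge\tfrac12$ (the transition region or the region where $\psi\equiv1$). This occurs because $z_1,z_2$ are arbitrary, and there the reaction difference behaves like $f'(0)\delta>0$. My first attempt is the positivity restriction: $w^->0$ forces $\Phi(\xi)>\delta\ge q_0e^{-\beta t}/2$, so in this case $\Phi(\xi)$ is bounded below by a multiple of $e^{-\beta t}$. Using the pushed-front asymptotics \eqref{asym1}, which give $|\Phi'|\ge \mu\Phi$ with $\mu\approx|\lambda_-|$ for large $\xi$, I would try to dominate $f'(0)\delta$ by $C\beta e^{-\beta t}|\Phi'(\xi)|\ge C\beta\mu e^{-\beta t}\Phi(\xi)\gtrsim C\beta\mu q_0 e^{-2\beta t}$. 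The exponent mismatch $e^{-2\beta t}$ versus $e^{-\beta t}$ shows this does not close directly. If it fails, I would instead revise the shift so that $C(1-e^{-\beta t})$ is replaced by a function growing fast enough, or restrict $\delta$ so that the term is exponentially small where $\psi\approx1$ and $\Phi$ is small. Either way, this is the step where Rothe's actual argument would need to be consulted.
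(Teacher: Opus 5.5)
\paragraph{The open case is where the statement fails.} The paper gives no proof of this lemma; it is quoted from Rothe. Your proposal does not supply one either. The case you leave open, $\Phi_{c^*}(\xi)\le\eta$ while $\psi(z-z_2)$ is not in its exponential regime, is not a technical leftover: it is where the statement as written breaks, and the ``exponent mismatch'' you found is a real obstruction.

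Fix any $\beta,C>0$ and any $z_1$. For large $t$, put $\epsilon=q_0e^{-\beta t}$ and choose $\xi_t$ with $\Phi_{c^*}(\xi_t)=2\epsilon$. Set $z_t=z_1+\xi_t-C(1-e^{-\beta t})$ and take $z_2\ge z_t+1$, so that $\psi(\cdot-z_2)\equiv1$ near $z_t$. Then $w^-(z_t,t)=\epsilon>0$, and your own formula gives exactly
\begin{equation*}
L[w^-](z_t,t)=\epsilon\bigl(\beta+f'(\theta_t)-C\beta q_0^{-1}|\Phi_{c^*}'(\xi_t)|\bigr),\qquad \theta_t\in(\epsilon,2\epsilon).
\end{equation*}
This is positive for large $t$, because $f'(\theta_t)\to f'(0)>0$ and $\Phi_{c^*}'(\xi_t)\to0$. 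So no $\beta,C$ work for all $z_1,z_2$, even if you only require $L[w^-]\le0$ on $\{w^->0\}$. Without that restriction it fails at every fixed $t$: by \eqref{condf}, $L[w^-]\to(\beta+f'(0))q_0e^{-\beta t}$ as $\xi\to\infty$ inside $\{\psi(z-z_2)=1\}$.

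Neither of your repairs rescues the statement. Near the positivity boundary, a shift $s(t)$ would need roughly $\dot s\ge(f'(0)+\beta)/|\lambda_-|$ for as long as that boundary stays in $\{\psi(z-z_2)=1\}$. So the shift cannot be bounded independently of $z_2-z_1$. Making $\delta$ small where $\Phi_{c^*}$ is small is itself a restriction on $z_2$ relative to $z_1$.

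\paragraph{What is missing, and two further repairs.} The lemma needs a hypothesis tying $z_2$ to $z_1$, for instance
\begin{equation*}
z_2\le z_1-C+\Phi_{c^*}^{-1}(\eta)-\ln 2/|\lambda_1|.
\end{equation*}
This guarantees that for every $t\ge0$ the tail $\{\Phi_{c^*}(\xi)\le\eta\}$ lies where $\psi(z-z_2)=e^{\lambda_1(z-z_2)}\le\frac12$. The hypothesis is legitimate because $\eta,\beta,C$ are fixed first. With it, your third bullet (using $\lambda_-<\lambda_1<\lambda_+$ and $f'=f'(0)$ on $(-\infty,0)$) covers the whole tail. Any application must then verify this relation; the paper's proof of Proposition~\ref{lem3} (case 1) does not.

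Two other steps of your sketch also need repair.
\begin{itemize}
\item For $q_0$ near $1$, nothing can be ``absorbed into the middle region'', since the regions are disjoint sets of $(z,t)$. Instead use that $f(\Phi)-f(\Phi-\delta)\le-m\delta$ for all $\Phi\in[1-\eta,1]$ and $0<\delta\le q_0$, with some $m>0$. This follows from $f'(1)<0$, $f>0$ on $(0,1)$ and compactness, once $\eta<1-q_0$ is small.
\item Monotonicity of $\chi$ gives no control of $\psi''+c^*\psi'$ in the transition layer of $\psi$. That layer may sit in the near-$1$ region, where the only favorable term is $-(m-\beta)\delta$. A $C^1$ cutoff with the prescribed values cannot be concave. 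So either take the Lipschitz weight $\min\{1,e^{\lambda_1 s}\}$, which makes $w^-$ the maximum of two smooth subsolutions, or impose an explicit bound on $\chi''$.
\end{itemize}
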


Then, we show Proposition \ref{lem3} for case 1.

\begin{proof}[Proof of Proposition \ref{lem3} for case 1]
First, we take  $q_{0}\in(0,1)$ such that $\displaystyle \liminf_{z\to-\infty}\inf_{x\in\mathbf{R}}u_{0}(x,z)>1-q_{0}$. Then, there exists $M>0$ such that, for $z\le -M$, 
\begin{equation*}
u_{0}(x,z)>1-q_{0}.
\end{equation*}
Then, we take $z_{2}=-M$ and $z_{1}$ such that, for any $z>-M$,
\begin{equation*}
w^{-}(z,0)<0.
\end{equation*} 
Moreover, we obtain that, for any $z\le -M$, 
\begin{equation*}
w^{-}(z,0)\le 1-q_{0}<u_{0}(x,z).
\end{equation*}
So, we obtain a lower bound.

Next, we prove upper estimate. If $a>0$ is sufficiently large, $w(z,t)\coloneqq e^{\lambda(z-z_{0})+at}$ becomes supersolution, Thus, by assumption about an initial data, if $z_{0}$ is sufficiently large, $u(x,z,t)\le e^{\lambda(z-z_{0})+at}$ and, for sufficiently large $T>0$, 
\begin{equation*}
u(x,z,T)\le1+\frac{q_{0}}{2},
\end{equation*}
and we take $M>0, z_{2}\ge M$ such that, for any $z\ge M$,
\begin{equation*}
w^{+}(z,0)\ge u(x,z,T),
\end{equation*}
and $z_{1}$ such that, for any $z<M$,
\begin{equation*}
\Phi_{c^{*}}(z-z_{1})\ge 1-\frac{q_{0}}{2}.
\end{equation*}
Then, for any $z<M$, 
\begin{equation*}
w^{+}(z,0)=\Phi_{c^{*}}(z-z_{1})+q_{0}\ge 1-\frac{q_{0}}{2}\ge u(x,z,t),
\end{equation*}
and, for any $z\ge M$, 
\begin{equation*}
w^{+}(z,0)\ge u(x,z,T)
\end{equation*}
Therefore, we obtain an upper bound.
\end{proof}

Second, we show upper and lower estimates for case 2. For the proof, we first recall the following lemma.

\begin{lem}[\cite{MR3746497}]\label{lem4}
There exists $\varepsilon_{0}\in(0,1), \beta>0$ such that, for any $\varepsilon\in(0,\varepsilon_{0}]$, there exists $\sigma>0$ such that the functions given by
\begin{equation}
\begin{aligned}
&u^{+}(z,t)\coloneqq (1+\varepsilon e^{-\beta t})\Phi_{c}(z-\sigma\varepsilon(1-e^{-\beta t})), \\
&u^{-}(z,t)\coloneqq (1-\varepsilon e^{-\beta t})\Phi_{c}(z+\sigma\varepsilon(1-e^{-\beta t}))
\end{aligned}
\end{equation}
satisfy
\begin{equation}
\begin{aligned}
&\mathrm{L}[u^{+}]\coloneqq u_{t}^{+}-u_{zz}^{+}-cu_{z}^{+}-f(u^{+})\ge0, \\
&\mathrm{L}[u^{-}]\coloneqq u_{t}^{-}-u_{zz}^{-}-cu_{z}^{-}-f(u^{-})\le0
\end{aligned}
\end{equation}
\end{lem}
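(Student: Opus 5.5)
We verify the two differential inequalities by direct computation, splitting the $\xi$-line into three regions. \textbf{Order of choices:} first $\varepsilon_0$ and a cut-off $M$, then $\beta$ (from the region near $1$), and only afterwards $\sigma$ (from the other two regions). Throughout we use that $\Phi_c'<0$ and that $\Phi_c'/\Phi_c$ tends to a negative constant as $\xi\to\infty$, by \eqref{asym}--\eqref{asym1}.

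\textbf{Computation for $u^+$.} Put $\eta(t)=\varepsilon e^{-\beta t}$ and $\xi=z-\sigma\varepsilon(1-e^{-\beta t})$, and write $\Phi=\Phi_c(\xi)$. Then
\begin{equation*}
u^+_t=-\beta\eta\Phi-(1+\eta)\sigma\beta\eta\Phi'.
\end{equation*}
Using $\Phi''+c\Phi'=-f(\Phi)$, we also have $u^+_{zz}+cu^+_z=-(1+\eta)f(\Phi)$. Hence
\begin{equation*}
\mathrm{L}[u^+]=-\beta\eta\Phi+(1+\eta)\sigma\beta\eta|\Phi'|+\bigl[\eta f(\Phi)+f(\Phi)-f((1+\eta)\Phi)\bigr].
\end{equation*}
The second term is nonnegative; it is the only term that $\sigma$ controls.

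\textbf{Region near $1$, i.e.\ $\xi\le -M$.} Here $\Phi'$ is small, so the $\sigma$-term is useless and we rely on $f'(1)<0$. Fix $\kappa>0$ and $\delta>0$ with $f'\le-2\kappa$ on $[1-\delta,1+\delta]$. Choose $\varepsilon_0$ small and $M$ large so that $\Phi$ and $(1+\eta)\Phi$ both lie in this interval for $\xi\le -M$. By the mean value theorem, $f(\Phi)-f((1+\eta)\Phi)=-f'(\theta)\eta\Phi\ge 2\kappa\eta\Phi$. Shrinking $\delta$ further if needed, $|f(\Phi)|\le\kappa\Phi/2$ there. Therefore
\begin{equation*}
\mathrm{L}[u^+]\ge \eta\Phi\,\bigl(2\kappa-\kappa/2-\beta\bigr)\ge0
\end{equation*}
once $\beta\le\kappa$. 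This fixes $\beta$, independently of $\varepsilon$ and $\sigma$.

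\textbf{Regions $\xi\ge M$ and $|\xi|\le M$.} On $\xi\ge M$ we have $|\Phi'|\ge k\Phi$ for some $k>0$, by the exponential asymptotics. The bracket is bounded below by $-2\|f'\|_{L^\infty[0,2]}\,\eta\Phi$, since $|f(\Phi)|\le\|f'\|\Phi$ and $|f(\Phi)-f((1+\eta)\Phi)|\le\|f'\|\eta\Phi$. So $\mathrm{L}[u^+]\ge\eta\Phi\,(\sigma\beta k-\beta-2\|f'\|)\ge0$ for $\sigma$ large. On the compact middle region $|\xi|\le M$ we have $|\Phi'|\ge m>0$, and the other terms are bounded by $C\eta$. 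Thus $\sigma\beta m\ge C$ suffices there. Taking $\sigma$ to be the larger of the two requirements gives $\mathrm{L}[u^+]\ge0$ everywhere. As stated, $\sigma$ is chosen after $\varepsilon$; in fact it can be taken independent of $\varepsilon\le\varepsilon_0$.

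\textbf{The subsolution $u^-$.} The computation is symmetric: replace $\eta$ by $-\eta$ and $\sigma$ by $-\sigma$. The term $-(1-\eta)\sigma\beta\eta|\Phi'|$ now has the favourable sign for $\mathrm{L}[u^-]\le0$. Near $1$ we use $f(\Phi)-f((1-\eta)\Phi)\le-2\kappa\eta\Phi$, with $\varepsilon_0<1$ small so that $(1-\eta)\Phi$ stays in $[1-\delta,1]$.

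\textbf{Main obstacle.} The delicate point is the region near $\Phi=1$, where the drift term degenerates. This is why $\beta$ must be fixed small from $f'(1)<0$ before choosing $\sigma$, and why $\varepsilon_0$ must be small.
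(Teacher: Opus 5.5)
Your proof is correct. The formula for $\mathrm{L}[u^+]$ is right, and the order of choices is consistent: $\varepsilon_0$ and $M$ from $f'(1)<0$, then $\beta\le\kappa$ independently of $\varepsilon$, then $\sigma$ from $\inf_{\xi\ge M}|\Phi_c'|/\Phi_c>0$ and $\min_{|\xi|\le M}|\Phi_c'|>0$. The subsolution case goes through with the sign you state, using $1-\eta\ge 1-\varepsilon_0$.

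The paper gives no proof of this lemma; it imports it from \cite{MR3746497}. So the only comparison is with that citation, and your argument adds something useful. It uses only (F), the $C^1$ regularity of $f$, $\Phi_c'<0$ and $0<\Phi_c<1$. Hence it holds for every $c\ge c^*$, including pushed fronts. That is exactly how the paper uses the lemma in case 2 of Proposition~\ref{lem3}, whereas the cited result (Theorem~B) is formulated under \eqref{assmpf}, which rules out pushed fronts. Your direct verification therefore closes a small logical gap in how the paper relies on the citation.

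One step needs an extra line. The asymptotics \eqref{asym}--\eqref{asym1} describe $\Phi_c$ only, not $\Phi_c'$, so they do not literally give $|\Phi_c'|\ge k\Phi_c$ on $[M,\infty)$. You can cite the standard phase-plane fact that the front enters $(0,0)$ along an eigendirection. Alternatively, argue directly with $w:=-\Phi_c'/\Phi_c>0$, which satisfies $w'=w^2-cw+f(\Phi_c)/\Phi_c$. Since $f(\Phi_c)/\Phi_c\to f'(0)>0$ and $c\ge c^*>0$, for large $\xi$ you get $w'\ge f'(0)/4$ whenever $w\le f'(0)/(4c)$. Thus $w$ is eventually bounded below by $f'(0)/(4c)$, and continuity gives a positive infimum on $[M,\infty)$.

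A cosmetic remark: since $0<\Phi_c<1$, you have $f(\Phi_c)>0$. So the term $\eta f(\Phi_c)$ already has the favourable sign, and the bound $|f(\Phi)|\le\kappa\Phi/2$ near $1$ is not needed.
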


In order to obtain the upper and lower bounds of the solution $u(x,z,t)$ at $z=\infty$, we show the following lemma.
\begin{lem}\label{lem5}
For sufficiently large $a>0$ and any $z_{0}\in\mathbb{R}$, 
the functions
\begin{equation*}
w^{\pm}(z,t)=(1\pm e^{-(z-at)})\Phi_{c}(z-z_{0})
\end{equation*}
satisfy
\begin{equation*}
\mathrm{L}[w^{+}] \ge 0, \qquad \mathrm{L}[w^{-}] \le 0.
\end{equation*}
\end{lem}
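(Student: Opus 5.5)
The plan is a direct computation of $\mathrm{L}[w^{\pm}]$ using the profile equation $\Phi_c''+c\Phi_c'+f(\Phi_c)=0$. The perturbation terms will be shown to be controlled by $g\Phi_c$, with a coefficient that becomes favorable once $a$ is large.

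\textbf{Computing $\mathrm{L}[w^\pm]$.} Write $g(z,t):=e^{-(z-at)}$, so that $g_t=ag$, $g_z=-g$ and $g_{zz}=g$. Abbreviate $\Phi=\Phi_c(z-z_0)$. Differentiating $w^{\pm}=(1\pm g)\Phi$ and eliminating $\Phi''+c\Phi'=-f(\Phi)$ gives
\begin{equation*}
\mathrm{L}[w^{\pm}]=\pm g\bigl[(a-1+c)\Phi+2\Phi'\bigr]+(1\pm g)f(\Phi)-f\bigl((1\pm g)\Phi\bigr).
\end{equation*}

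\textbf{Bounding $\Phi'$ by $\Phi$.} Set $K:=\sup_{s}|\Phi_c'(s)|/\Phi_c(s)$. This is finite for the following reasons:
\begin{itemize}
\item $\Phi_c>0$ everywhere.
\item As $s\to-\infty$, $\Phi_c\to1$ and $\Phi_c'\to0$.
\item As $s\to\infty$, $\Phi_c'/\Phi_c$ tends to a negative root of $\lambda^2+c\lambda+f'(0)=0$, by \eqref{asym}--\eqref{asym1}.
\end{itemize}
Consequently $(a-1+c)\Phi+2\Phi'\ge(a-1+c-2K)\Phi$. Let $L_f$ be the Lipschitz constant of $f$ on $[0,1]$, and note $|f(s)|\le L_f s$ there.

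\textbf{Sign of $\mathrm{L}[w^+]$.} I split into two cases according to the size of $(1+g)\Phi$.
\begin{itemize}
\item If $(1+g)\Phi>1$, then $f((1+g)\Phi)<0$ by (F), and $(1+g)f(\Phi)\ge0$. Hence $\mathrm{L}[w^+]\ge g\Phi(a-1+c-2K)$.
\item If $(1+g)\Phi\le1$, then both $\Phi$ and $(1+g)\Phi$ lie in $[0,1]$. Therefore
\begin{equation*}
(1+g)f(\Phi)-f((1+g)\Phi)\ge gf(\Phi)-L_f g\Phi\ge -L_f g\Phi,
\end{equation*}
and so $\mathrm{L}[w^+]\ge g\Phi(a-1+c-2K-L_f)$.
\end{itemize}

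\textbf{Sign of $\mathrm{L}[w^-]$.} Again there are two cases, now according to whether $g\le1$.
\begin{itemize}
\item If $g\le1$, then $w^-\in[0,\Phi]$. This gives
\begin{equation*}
(1-g)f(\Phi)-f(w^-)\le |f(\Phi)-f(w^-)|+g|f(\Phi)|\le 2L_f g\Phi.
\end{equation*}
\item If $g>1$, then $w^-<0$, and the normalization \eqref{condf} gives $-f(w^-)=f'(0)(g-1)\Phi\le f'(0)g\Phi$. In addition, $(1-g)f(\Phi)\le0$.
\end{itemize}
In both cases $\mathrm{L}[w^-]\le -g\Phi\,(a-1+c-2K-2L_f-f'(0))$.

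\textbf{Conclusion.} Choose $a>1-c+2K+2L_f+f'(0)$. Then $\mathrm{L}[w^+]\ge0$ and $\mathrm{L}[w^-]\le0$. The constants $K$, $L_f$ and $f'(0)$ do not depend on $z_0$, since $K$ is a supremum over all shifts of $\Phi_c$; hence the same $a$ works for every $z_0\in\mathbb{R}$.

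\textbf{Main obstacle.} The delicate point is that $g$ is unbounded as $z\to-\infty$, so $f$ cannot simply be linearized around $\Phi$. The case splits above are what handle this. For $w^+$, the sign condition on $f$ above $1$ makes the large-$g$ region harmless. For $w^-$, the assumption \eqref{condf}, that $f$ is linear on $(-\infty,0)$, makes $f(w^-)$ explicit when $w^-$ is negative.
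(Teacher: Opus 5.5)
Your proof is correct and follows the paper's argument: the same direct computation, the bound $|\Phi_c'|\le k\Phi_c$, the case split according to whether $w^+\ge 1$, and the linear extension \eqref{condf} to handle the region where $w^-<0$. You make that last step explicit through the case $g>1$, where the paper only says ``in view of \eqref{condf}''; your coefficient $a-1+c$ is also the correct one (the paper writes $a+1+c$), which is immaterial since $a$ is taken large.
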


\begin{proof}
By a direct computation, we have
\begin{align*}
\mathrm{L}[w^{\pm}]=\pm(a+1+c+2\Phi_{c}^{\prime}/\Phi_{c})\Phi_{c} e^{-(z-at)}+(1\pm e^{-(z-at)})f(\Phi_{c})-f((1\pm e^{-(z-at)})\Phi_{c})
\end{align*}
Then, we take $k>0,a>0$ such that $|\Phi_{c}^{\prime}|/\Phi_{c}\le k$ and $a\ge2k-1-c+\|f^{\prime}\|_{L^{\infty}[0,1]}$.
First, we prove that $u^{+}$ is a supersolution. If $w^{+}\ge1$, 
\begin{align*}
\mathrm{L}[w^{+}]&\ge(a+1+c-2k)\Phi_{c}e^{-(z-at)}\\
&\ge0.
\end{align*}
If $w^{+}\le1$,
\begin{align*}
\mathrm{L}[w^{+}]&\ge(a+1+c-2k-\|f^{\prime}\|_{L^{\infty}[0,1]})\Phi_{c}e^{-(z-at)}\\
&\ge0
\end{align*}
Second, we prove that $w^{-}$ is a subsolution. In view of \eqref{condf}, we have
\begin{align*}
\mathrm{L}[w^{-}]&\le-(a+1+c-2k-\|f^{\prime}\|_{L^{\infty}[0,1]})\Phi_{c}e^{-(z-at)}\\
&\le0
\end{align*}
Thus the proof is complete.
\end{proof}

\noindent Next, we prove Proposition \ref{lem3}.

\begin{proof}[Proof of Proposition \ref{lem3} for case 2]
By Lemma \ref{lem4}, it is sufficient to prove that, for some $T>0$, there exists $z_{1},z_{2}\in\mathbb{R}$ such that
\begin{equation*}
(1-\varepsilon_{0})\Phi_{c}(z-z_{1})\le u(x,z,T)\le(1+\varepsilon_{0})\Phi_{c}(z-z_{2}).
\end{equation*}
First, we prove the upper bound. For some $T>0$,
\begin{equation*}
u(x,z,T)\le1+\frac{\varepsilon_{0}}{2}\;((x,z)\in\mathbf{R}^{n})
\end{equation*}
and, by assumption of an initial data and (\ref{asym}), there exists $M>0$ and $z_{1}\in\mathbb{R}$ such that
\begin{equation*}
u_{0}(x,z)\le\Phi_{c}(z-z_{1})\;(z\ge M).
\end{equation*}
Then, if we take $z_{2}\in\mathbb{R}$ such that $\|u_{0}\|_{L^{\infty}}\le(1+e^{-(M-z_{2})})\Phi_{c}(M-z_{1})$, 
\begin{equation*}
u_{0}(x,z)\le\Phi_{c}(z-z_{1})\le(1+e^{-(z-z_{2})})\Phi_{c}(z-z_{1})\;(z\ge M)
\end{equation*}
\begin{equation*}
u_{0}(x,z)\le(1+e^{-(M-z_{2})})\Phi_{c}(M-z_{1})\le(1+e^{-(z-z_{2})})\Phi_{c}(z-z_{1})\;(z\le M)
\end{equation*}
Therefore, by Lemma 3, we have
\begin{equation*}
u_{0}(x,z)\le(1+e^{-(z-z_{2}-at)})\Phi_{c}(z-z_{1}).
\end{equation*}
We take $T>0$ such that
\begin{equation*}
u(x,z,T)\le 1+\frac{\varepsilon_{0}}{2}.
\end{equation*}
Then, $u(x,z,T)\le(1+e^{-(z-z_{2}-aT)})\Phi_{c}(z-z_{1})$ and by (\ref{asym}) there exists $M_{1}>0$ and $z_{3}\in\mathbf{R}$ such that
\begin{equation*}
u(x,z,T)\le\Phi_{c}(z-z_{3})\;(z\ge M_{1})
\end{equation*}
Therefore, if we take $z_{4}\ge z_{3}$ such that $\Phi_{c}(M_{1}-z_{4})\ge\frac{1+\frac{\varepsilon_{0}}{2}}{1+\varepsilon_{0}}$, we have that, for any $z\ge M_{1}$, 
\begin{align*}
u(x,z,T)&\le\Phi_{c}(z-z_{3})\\
&\le\Phi_{c}(z-z_{4})\le(1+\varepsilon_{0})\Phi_{c}(z-z_{4})
\end{align*}
For any $z\le M_{1}$,
\begin{align*}
u(x,z,T)&\le1+\frac{\varepsilon_{0}}{2}\\
&\le(1+\varepsilon_{0})\Phi_{c}(M_{1}-z_{4})\\
&\le(1+\varepsilon_{0})\Phi_{c}(z-z_{4}).
\end{align*}
From above, we obtain upper bound.

Second, we prove the lower bound. We define Heviside function $H(x)$ as
\begin{equation*}
H(x)\coloneqq\left\{
\begin{aligned}
&0\ (x<0)\\
&1\ (x\ge0).
\end{aligned}
\right.
\end{equation*}
Then, by assumption, there exists $M>0,\delta>0$ such that
\begin{equation}
u_{0}(x,z)\ge\delta>0\ (z<M)
\end{equation}
Therefore, we take $w(z,t)$ such that
\begin{equation*}
\left\{
\begin{aligned}
&w_{t}=w_{zz}+cw_{z}+f(w)\\
&w(z,0)=\delta(1-H(z-M)).
\end{aligned}
\right.
\end{equation*}
Then, since $w$ is nonincreasing and there is hair-Trigger effect in the original coordinate, there exists $T>0$ such that $w(-cT,T)\ge1-\frac{\varepsilon_{0}}{2}$.
Therefore, by $u(x,z,T)\ge w(z,T)$, if $z\le-cT$
\begin{equation*}
u(x,z,T)\ge w(z,T)\ge w(-cT,T)\ge1-\frac{\varepsilon_{0}}{2},
\end{equation*}
and, by assumption and (\ref{asym}), there exists $M>0,z_{1}\in\mathbf{R}$ such that
\begin{equation*}
u_{0}(x,z)\ge\Phi_{c}(z-z_{1})\;(z\ge M)
\end{equation*}
So, when we take $z_{2}\in\mathbf{R}$ such that $e^{-(M-z_{2})}\ge1$, for any $z\le M$,
\begin{equation*}
u_{0}(x,z)\ge0\ge(1-e^{-(z-z_{2})})\Phi_{c}(z-z_{1})
\end{equation*}
If $z\ge M$, 
\begin{equation*}
u_{0}(x,z)\ge\Phi_{c}(z-z_{1})\ge(1-e^{-(z-z_{2})})\Phi_{c}(z-z_{1}).
\end{equation*}
Therefore, by Lemma \ref{lem5}, we have
\begin{equation*}
u(x,z,t)\ge(1-e^{-(z-z_{2}-at)})\Phi_{c}(z-z_{1}).
\end{equation*}
By this and (\ref{asym}), there exists $M_{1},z_{3}$ such that, if $M_{1}\le z$, we have
\begin{equation*}
u(x,z,T)\ge\Phi_{c}(z-z_{3}).
\end{equation*}
From the fact that $w(z,t)>0\;(t>0)$, for any $-cT\le z \le M_{1}$, $u(x,z,T)\ge w(z,T)\ge w(M_{1},T)>0$. Then, we can take $z_{4}\le z_{3}$ such that 
\begin{equation*}
u(x,z,T)\ge\Phi_{c}(z-z_{4})\quad(-cT\le z \le M_{1}).
\end{equation*}
From above, for any $z>M_{1}$, 
\begin{align*}
u(x,z,T)&\ge\Phi_{c}(z-z_{3})\\
&\ge\Phi_{c}(z-z_{4}).
\end{align*}
For any $-cT\le z\le M_{1}$,
\begin{equation*}
u(x,z,T)\ge\Phi_{c}(z-z_{4})
\end{equation*}
For any $z\le -cT$
\begin{align*}
u(x,z,T)&\ge1-\frac{\varepsilon_{0}}{2}\\
&\ge(1-\varepsilon_{0})\Phi_{c}(z-z_{3})\\
&\ge(1-\varepsilon_{0})\Phi_{c}(z-z_{4})
\end{align*}
By these, we obtain lower bound.
\end{proof}


\section{$\omega$-limit points}
 In this section, we first introduce the notion of $\omega$-limit points of the solution $u(x,z,t)$ of (\ref{MRea-Diff}). Then, we show that any $\omega$-limit point is a planar wave under assumptions. This definition is the same as that of \cite{MR2837694}.
 
\begin{dfn}[\cite{MR2837694}]
A function $w(x,z,t)$ defined on $\mathbb{R}^{n-1}\times\mathbb{R}\times\mathbb{R}$ is called an $\omega$-limit point of the solution $u(x,z,t)$ of (\ref{MRea-Diff}) if there exists a sequence $\{(x_{i},t_{i})\}$ such that $0<t_{1}<t_{2}<\cdots\to\infty$ and that
\begin{equation*}
u(x+x_{i},z,t+t_{i})\to w(x,z,t)\quad\mbox{as}\;i\to\infty\;\mbox{in}\; C_{loc}^{2,1}(\mathbb{R}^{n}\times\mathbb{R}).
\end{equation*}
\end{dfn}

  Berestycki and Hamel \cite{MR2373726} obtained the following result that states that any entire solution of monostable reaction-diffusion equation lying between two planar waves is itself a planar waves.
 
\begin{lem}[\cite{MR2373726}]\label{lem7}
Let $u(x,z,t)$ be a function that is defined on $\mathbb{R}^{n-1}\times\mathbb{R}\times\mathbb{R}$ and satisfies
\begin{equation*}
u_{t}=\Delta u+cu_{z}+f(u),\quad(x,z)\in\mathbb{R}^{n},t\in\mathbb{R}.
\end{equation*}
Assume further that there exists three constants $z_{*},z^{*},c\in\mathbb{R}$ such that 
\begin{equation*}
\Phi_{c}(z-z_{*})\le u(x,z,t)\le \Phi_{c}(z-z^{*}),\quad(x,z)\in\mathbb{R}^{n},t\in\mathbb{R}.
\end{equation*}
Then there exists a constant $z_{0}\in[z_{*},z^{*}]$ such that
\begin{equation*}
u(x,z,t)=\Phi(z-z_{0}),\quad(x,z)\in\mathbb{R}^{n},t\in\mathbb{R}.
\end{equation*}
\end{lem}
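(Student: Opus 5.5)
The plan is a sliding argument in the $z$-direction, combined with compactness in $(x,t)$ and the strong maximum principle. Since $z_*\le z^*$ and $\Phi_c$ is decreasing, I set
\begin{equation*}
s^{*}:=\inf\{s\in\mathbb{R};\ u(x,z,t)\le\Phi_{c}(z-s)\ \mbox{for all}\ (x,z,t)\in\mathbb{R}^{n}\times\mathbb{R}\}.
\end{equation*}
By hypothesis $s^*\le z^*$, and because $u\ge\Phi_c(z-z_*)$ we also have $s^*\ge z_*$. By continuity, $u\le\Phi_c(z-s^*)$ everywhere.

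The goal is to show $u\equiv\Phi_c(z-s^*)$, i.e.\ $z_0=s^*$. Suppose not. Then the strong maximum principle, applied to $\Phi_c(z-s^*)-u\ge0$, which satisfies a linear parabolic equation with bounded zero-order coefficient, gives strict inequality everywhere. Indeed, a touching point at time $t_0$ would force equality for all $t\le t_0$, and forward uniqueness of the Cauchy problem would then give equality for all $t$. By minimality of $s^*$, there are points $(x_k,z_k,t_k)$ and $s_k\uparrow s^*$ with $u(x_k,z_k,t_k)>\Phi_c(z_k-s_k)$.

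Consider the translates $u_k(x,z,t):=u(x+x_k,z,t+t_k)$. Parabolic estimates give convergence in $C^{2,1}_{loc}$, along a subsequence, to an entire solution $u_\infty$ that satisfies the same two-sided bounds and $u_\infty\le\Phi_c(z-s^*)$. Three cases arise for the sequence $z_k$.
\begin{enumerate}
\item[(a)] If $z_k$ stays bounded, then $u_\infty$ touches $\Phi_c(z-s^*)$ at a finite point, so $u_\infty\equiv\Phi_c(z-s^*)$.
\item[(b)] If $z_k\to-\infty$, the quantity $\Phi_c(z-s^*)-u$ is a small nonnegative function near the value $1$, where $f'(1)<0$.
\item[(c)] If $z_k\to+\infty$, the touching is in the tail where both functions are near $0$.
\end{enumerate}
Case (a) by itself gives no contradiction. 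To obtain one, I would first prove a uniform tail estimate: for every $\eta>0$ there is $A$ such that $u\le\Phi_c(z-s^*+\eta)$ for $z\le -A$ and for $z\ge A$. With that in hand, the sliding can be carried out as follows. If $s^*$ could not be decreased, the near-touching would have to occur on the compact strip $|z|\le A$. The limit $u_\infty$ would then equal $\Phi_c(z-s^*)$ identically, while still obeying the same tail estimate. That is impossible, since $\Phi_c(z-s^*)>\Phi_c(z-s^*+\eta)$ for $\eta>0$.

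The estimate at $z\to-\infty$ is standard. The function $w=\Phi_c(z-s^*+\eta)+\delta e^{-\mu t}$ type, or a comparison with $1-$ small constants using $f'(1)<0$, shows that $\sup_x\bigl(u-\Phi_c(z-s)\bigr)^+$ decays on the half-space $z\le -A$. Running this from $t=-\infty$ makes the excess zero there, because the solution is entire and bounded.

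The main obstacle is the tail $z\to+\infty$, where $0$ is unstable. Here I would exploit that both bounding fronts share the same decay rate: $\Phi_c(z)\sim(\alpha z+\beta)e^{\lambda_- z}$ in the pushed case (or $e^{\lambda_+ z}$ otherwise). I would write $u=e^{\lambda z}v$ with $\lambda$ the relevant exponent. Since $\lambda^2+c\lambda+f'(0)=0$, the conjugated equation for $v$ becomes
\begin{equation*}
v_t=\Delta v+(c+2\lambda)v_z+\bigl(f(u)/u-f'(0)\bigr)v,
\end{equation*}
where $c+2\lambda\neq0$ is a drift toward $-\infty$ in the pushed case and the zero-order term is nonpositive-small. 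The ratio $u/\Phi_c(z-s^*)$ is bounded above and below by positive constants by hypothesis. I would apply the maximum principle to $\limsup_{z\to\infty}\sup_{x,t}u/\Phi_c(z-s^*)$, or equivalently a Harnack and compactness argument on the limit profiles $v$, to show that this ratio converges to the same constant as for $u_\infty$. This forces the required tail inequality after a small shift $\eta$.

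If this direct route fails, I would fall back on Berestycki--Hamel's device. First show $u_z<0$ by sliding $u$ against its own translates $u(x,z+\tau,t)$, using the same tail controls. Then show that $u$ does not depend on $x$ by sliding in the $x$-directions. This reduces the problem to a one-dimensional entire solution trapped between two shifts of $\Phi_c$, which is a shift of $\Phi_c$ by the uniqueness of fronts.
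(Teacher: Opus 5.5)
The paper gives no proof of this lemma; it is quoted from Berestycki--Hamel. Your argument therefore has to stand on its own, and as written it does not close.

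\textbf{The tail estimate in your main route cannot be derived.} Your main route rests on the tail estimate $u\le\Phi_c(z-s^*+\eta)$ for $|z|\ge A$. This estimate is false for $u=\Phi_c(z-s^*)$, which satisfies all the hypotheses. So it could only come from the contradiction hypothesis $u\not\equiv\Phi_c(z-s^*)$, and neither of your derivations uses that hypothesis.
\begin{itemize}
\item At $z\to-\infty$: a comparison based on $f'(1)<0$ controls $(u-\Phi_c(z-s))^+$ on $\{z\le -A\}$ only once $u\le\Phi_c(z-s)$ is known on $\{z=-A\}$. That is exactly what is unknown for $s=s^*-\eta$.
\item At $z\to+\infty$: your ratio sketch works against you. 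The $\eta$-margin requires $\limsup_{z\to\infty}\sup_{x,t}u/\Phi_c(z-s^*)\le\lim_{z\to\infty}\Phi_c(z-s^*+\eta)/\Phi_c(z-s^*)<1$. Showing that the ratio tends to its value for $u_\infty\equiv\Phi_c(z-s^*)$, namely $1$, would rule this out rather than prove it.
\item Also, $f(u)/u-f'(0)$ need not be nonpositive; for nonlinearities with pushed fronts it is typically positive near $0$.
\end{itemize}
The obstruction is structural. With the fixed profile $\Phi_c(\cdot-s)$ as comparison function, your case (a) cannot be excluded. A limit $u_\infty\equiv\Phi_c(z-s^*)$ along $(x_k,t_k)\to\infty$ is perfectly compatible with $u\not\equiv\Phi_c(z-s^*)$.

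\textbf{The missing idea: slide $u$ against its own shifted copies.} Fix $(h,s)\in\mathbb{R}^{n-1}\times\mathbb{R}$ and set $\tilde u^{\tau}(x,z,t)=u(x+h,z-\tau,t+s)$.
\begin{itemize}
\item One has $\tilde u^\tau\ge u$ for $\tau\ge z^*-z_*$.
\item Suppose the critical $\tau^*$ were positive. A touching limit would give $u_\infty(x+h,z-\tau^*,t+s)\equiv u_\infty(x,z,t)$. Iterating yields $u_\infty(x,z,t)\ge\Phi_c(z-k\tau^*-z_*)\to1$, contradicting $u_\infty\le\Phi_c(z-z^*)<1$.
\item This gives a positive gap on $|z|\le A$ for $\tau$ slightly below $\tau^*$. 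The maximum principle then propagates the gap to the tails.
\item Near $z=-\infty$ this uses $f'<0$ near $1$.
\item Near the unstable end $z=+\infty$ it uses a weighted maximum principle for $(u-\tilde u^\tau)e^{-\mu z}$ with $\lambda_-<\mu<\lambda_+$. Then $\mu^2+c\mu+f'(0)<0$, while $|u-\tilde u^\tau|\le Ce^{\lambda_- z}$. This is exactly where $c>2\sqrt{f'(0)}$ and the fast decay of the pushed front enter. For $c>c^*$, where $\Phi_c\sim e^{\lambda_+z}$, this weight fails and more is needed.
\item Getting $\tau^*=0$ for all $(h,s)$ gives $u(x+h,z,t+s)\equiv u(x,z,t)$. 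So $u=u(z)$ is a front of speed $c$, which is unique up to translation.
\end{itemize}

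\textbf{Your fallback points in this direction but leaves gaps.}
\begin{itemize}
\item ``Sliding in the $x$-directions'' is not an ordered family. It must be replaced by $z$-sliding of $x$-shifted copies.
\item Independence of $t$ is omitted. A one-dimensional entire solution trapped between two fronts is not automatically a travelling wave.
\item The tail control at $+\infty$, which carries the real difficulty, is still not supplied.
\end{itemize}
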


The rest of  lemmas and corollaries of this section and next section can be proved in a similar way of \cite{MR2837694}, hence we only state the results and omit their proofs.

\begin{cor}[Characterization of $\omega$-limit points]
Let $u(x,z,t)$ be a solution of (\ref{MRea-Diff}).Then any $\omega$-limit point $w(x,z,t)$ of u is a planar wave, that is, there exists a constant $z_{0}\in\mathbb{R}$ such that
\begin{equation*}
w(x,z,t)=\Phi_{c}(z-z_{0}),\quad(x,z)\in\mathbb{R}^{n},t\in\mathbb{R}.
\end{equation*}
\end{cor}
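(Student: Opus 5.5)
The plan is to combine Proposition~\ref{lem3} with Lemma~\ref{lem7}. Let $w$ be an $\omega$-limit point, so that $u(x+x_i,z,t+t_i)\to w(x,z,t)$ in $C^{2,1}_{loc}$ along some sequence $t_i\to\infty$.

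First, $w$ is an entire solution. Parabolic interior estimates give uniform $C^{2+\alpha,1+\alpha/2}$ bounds on $u$ for $t\ge1$, since $u$ is bounded. For each fixed $t\in\mathbb{R}$ the shifted functions are defined once $t+t_i>0$. Passing to the limit in \eqref{MRea-Diff} shows that $w$ solves $w_t=\Delta w+cw_z+f(w)$ on $\mathbb{R}^n\times\mathbb{R}$.

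Second, we transfer the bounds of Proposition~\ref{lem3}. Those bounds are uniform in $x\in\mathbb{R}^{n-1}$ and $z\in\mathbb{R}$, so for any $\eta>0$ there is $T_\eta$ such that
\begin{equation*}
\Phi_c(z-z_1)-\eta\le u(x,z,s)\le\Phi_c(z-z_0)+\eta
\end{equation*}
for all $s\ge T_\eta$ and all $(x,z)$. Here $z_0,z_1$ are the constants of Proposition~\ref{lem3}; note that the statement names them $z_1,z_2$ but the displays use $z_0,z_1$. Evaluating at $s=t+t_i$ with $x$ replaced by $x+x_i$ and letting $i\to\infty$ gives the same inequalities for $w$. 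Letting $\eta\to0$ then yields
\begin{equation*}
\Phi_c(z-z_1)\le w(x,z,t)\le\Phi_c(z-z_0)
\end{equation*}
on all of $\mathbb{R}^n\times\mathbb{R}$.

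Finally, Lemma~\ref{lem7} applied with $z_*=z_1$ and $z^*=z_0$ shows that $w(x,z,t)=\Phi_c(z-\zeta)$ for some constant $\zeta\in[z_1,z_0]$. Renaming $\zeta$ as the constant $z_0$ of the statement gives the claim. The same argument works under \eqref{init2}, since Proposition~\ref{lem3} covers that case as well.

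The main obstacle is the hypothesis check for Lemma~\ref{lem7}. Its two-sided bound must hold for all $t\in\mathbb{R}$, including very negative $t$, so the uniformity in time of the $\limsup$ and $\liminf$ statements in Proposition~\ref{lem3} is essential. That uniformity is what lets the bound pass to every time slice of $w$. One also needs $z_1\le z_0$, but this is automatic, because the inequality $\Phi_c(z-z_1)\le\Phi_c(z-z_0)$ holds at every point and $\Phi_c$ is strictly decreasing. No further work is needed beyond the cited Berestycki--Hamel result.
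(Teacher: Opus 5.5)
Your proposal is correct and follows the route the paper intends: the paper omits the proof (deferring to Matano--Nara) and places the corollary directly after the Berestycki--Hamel lemma, so its argument is likewise to pass the bounds of Proposition~\ref{lem3} to $w$ and then invoke Lemma~\ref{lem7}. One minor remark: what is really essential is the uniformity in $x$, needed because of the shifts $x_i$, together with $t+t_i\to\infty$ for each fixed $t$; uniformity in $z$ is not needed, since the bounds are only used pointwise in $z$.
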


From this, we can prove the following result.

\begin{lem}[Monotonicity in $z$]\label{lem8}
Let $u(x,z,t)$ be a solution of (\ref{MRea-Diff}). Then for any constant $R>0$, there exists a constant $T>0$ such that 
\begin{equation*}
\inf_{x\in\mathbb{R}^{n-1},|z|\le R, t\ge T}-u_{z}(x,z,t)>0
\end{equation*}
\end{lem}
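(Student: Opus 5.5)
We argue by contradiction, using the characterization of $\omega$-limit points from the preceding corollary together with the strict monotonicity of the pushed front profile, $\Phi_c'<0$ on $\mathbb{R}$. Suppose the statement fails for some $R>0$. Then there exist sequences $x_i\in\mathbb{R}^{n-1}$, $z_i\in[-R,R]$ and $t_i\to\infty$ such that
\begin{equation*}
\liminf_{i\to\infty}\, -u_z(x_i,z_i,t_i)\le 0 .
\end{equation*}
Passing to a subsequence, we may assume that $z_i\to\bar z\in[-R,R]$ and that $t_i$ is increasing.

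The first step is compactness. Since $0\le u\le \max\{1,\|u_0\|_{L^\infty}\}$ and $f\in C^1$, standard parabolic (Schauder) estimates bound $u$ uniformly in $C^{2+\theta,1+\theta/2}$ on $\mathbb{R}^n\times[1,\infty)$. By Arzel\`a--Ascoli and a diagonal argument, the shifted functions $u_i(x,z,t):=u(x+x_i,z,t+t_i)$ have a subsequence converging in $C^{2,1}_{loc}(\mathbb{R}^n\times\mathbb{R})$ to some $w$. By definition, $w$ is an $\omega$-limit point of $u$.

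Next we identify the limit. By the corollary on the characterization of $\omega$-limit points, $w(x,z,t)=\Phi_c(z-z_0)$ for some $z_0\in\mathbb{R}$. The $C^{2,1}_{loc}$ convergence includes convergence of $u_z$, uniformly on compact sets. Since $(0,z_i,0)$ lies in a compact set and $z_i\to\bar z$, we obtain
\begin{equation*}
-u_z(x_i,z_i,t_i)=-\partial_z u_i(0,z_i,0)\to -\Phi_c'(\bar z-z_0)>0 ,
\end{equation*}
which contradicts the choice of the sequence. Strict negativity of $\Phi_c'$ holds because $\Phi_c$ is monotone: differentiating the profile equation shows that $\Phi_c'$ cannot vanish without $\Phi_c$ being constant. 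Hence some $T>0$ gives the required uniform positive lower bound.

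The main subtlety is the uniformity in $x\in\mathbb{R}^{n-1}$. This is exactly why the $\omega$-limit definition allows spatial shifts $x_i$. We also need the position $z_0$ of the limiting front to be bounded, which follows from Proposition~\ref{lem3}: the limit satisfies $\Phi_c(z-z_1)\le w\le \Phi_c(z-z_0')$ for the constants there. That proposition is also what guarantees the hypotheses of Lemma~\ref{lem7} behind the corollary. Even without this bound, the contradiction already follows for each fixed sequence, so the argument goes through once the compactness step is justified.
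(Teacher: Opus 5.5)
Your argument is correct and is essentially the proof the paper intends. The paper omits it and defers to Matano--Nara, where monotonicity in $z$ is obtained exactly this way: extract an $\omega$-limit point along a violating sequence $(x_i,z_i,t_i)$, then use the planar-wave characterization together with $\Phi_c'<0$. One small fix: choose the sequence with $-u_z(x_i,z_i,t_i)\le 1/i$ rather than only $\liminf\le 0$, so that the bound survives every passage to a subsequence.
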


\begin{cor}[Monotonicity in $z$ around the level-set]\label{lem9}
Let $u(x,z,t)$ be (\ref{MRea-Diff}). Then there exists a constant $T>0$ such that
\begin{equation*}
\inf_{(x,z,t)\in D}-u_{z}(x,z,t)>0
\end{equation*}
where $D=\left\{(x,z,t)\in\mathbb{R}^{n}\times[T,\infty)||u(x,z,t)-\Phi_{c}(0)|\le\min(1-\Phi_{c}(0),\Phi_{c}(0))\right\}$. 
\end{cor}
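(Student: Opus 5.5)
The plan is to reduce the Corollary to Lemma~\ref{lem8}. I would show that for $t$ large the set $D$ lies inside a slab $\{|z|\le R\}$, and then invoke Lemma~\ref{lem8} with that $R$. Write $m:=\min(1-\Phi_c(0),\Phi_c(0))$, so that $D$ is the set where $u\in[\Phi_c(0)-m,\Phi_c(0)+m]$.

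\textbf{Step 1 (localization).} I would use Proposition~\ref{lem3}, i.e. the estimates \eqref{upper} and \eqref{lower}. For any $\eta>0$ they give a $T_\eta$ such that, for all $t\ge T_\eta$ and all $x,z$,
\begin{equation*}
\Phi_c(z-z_1)-\eta\le u(x,z,t)\le \Phi_c(z-z_0)+\eta .
\end{equation*}
Suppose the values of $u$ on $D$ lie in a compact subinterval $[a,b]\subset(0,1)$. Choose $\eta<\min(a,1-b)/2$. Then $u\ge a$ forces $\Phi_c(z-z_0)\ge a/2$, which bounds $z$ from above. Likewise $u\le b$ forces $\Phi_c(z-z_1)\le (1+b)/2$, which bounds $z$ from below. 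Together these give $D\cap\{t\ge T_\eta\}\subset\{|z|\le R\}$ for some $R$.

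\textbf{Step 2 (conclusion).} Apply Lemma~\ref{lem8} with this $R$ to get $T'$ with $\inf_{|z|\le R,\,t\ge T'}(-u_z)>0$. Taking $T=\max(T_\eta,T')$ yields the claimed positive infimum on $D$.

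An equivalent route by contradiction would also work. Take points $(x_i,z_i,t_i)\in D$ with $-u_z\to0$, and shift by $(x_i,t_i)$. By the Corollary on $\omega$-limit points the shifted functions converge to some $\Phi_c(z-z_*)$. If the $z_i$ stay bounded, a subsequence converges to some $\bar z$ with $\Phi_c'(\bar z-z_*)=0$, which contradicts $\Phi_c'<0$.

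\textbf{Main obstacle.} The delicate point is the endpoint of the band. As written, $D$ is closed and its extreme level $\Phi_c(0)\pm m$ equals $0$ (if $\Phi_c(0)\le\frac12$) or $1$ (if $\Phi_c(0)\ge\frac12$). Step~1 needs the values of $u$ on $D$ to stay in a compact subset of $(0,1)$, and it is exactly here that boundedness of $z$ on $D$ must come from.

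If $u$ could take values tending to $0$ on $D$, the localization fails. In the contradiction argument this corresponds to $z_i\to\infty$, where $u_z\to0$ as well. So I would first check carefully that membership in $D$ at large times keeps $u$ away from the excluded extreme level. If Step~1 cannot be closed at that endpoint, the argument as planned only yields the infimum on a slightly narrower band, and the stated $D$ would need to be read with the extreme level excluded.
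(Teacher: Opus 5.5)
Your plan (confine $D$ to a slab $\{|z|\le R\}$ using Proposition~\ref{lem3}, then apply Lemma~\ref{lem8}) is exactly the Matano--Nara argument the paper defers to when it omits this proof. Your Steps~1--2 are correct whenever the values of $u$ defining $D$ range over a closed interval $[a,b]\subset(0,1)$. However, the obstacle you flag cannot be removed by a careful check. For $D$ as literally written the corollary is false, so Step~1 has a gap that cannot be closed.

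Put $m=\min(1-\Phi_c(0),\Phi_c(0))$. If $\Phi_c(0)\le\frac12$, then $m=\Phi_c(0)$ and, since $u\ge0$, $D=\{t\ge T,\ u\le 2\Phi_c(0)\}$. For any $t\ge T$, $u(x,z,t)\to0$ as $z\to\infty$ uniformly in $x$, e.g.\ by the bound $u\le e^{\lambda(z-z_0)+at}$ from the proof of Proposition~\ref{lem3}. So $D$ contains a whole half-line $\{z\ge z^*\}$. If $-u_z\ge\kappa>0$ there, integrating gives $u(x,z,t)\le u(x,z^*,t)-\kappa(z-z^*)<0$ for large $z$, a contradiction.

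If $\Phi_c(0)\ge\frac12$, the band is $[2\Phi_c(0)-1,1]$ and the same argument works as $z\to-\infty$, e.g.\ when $u_0\le1$. Then $u\le1$, and \eqref{lower} puts a half-line $\{z\le z^*\}$ into $D$ for large $t$; $-u_z\ge\kappa$ there would make $u$ unbounded. So $\inf_D(-u_z)\le 0$, and your contradiction route fails for the same reason: the case $z_i\to\pm\infty$ really occurs.

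Your fallback of reading $D$ ``with the extreme level excluded'' does not repair this. Replacing $\le$ by $<$ changes nothing, since $u>0$ for $t>0$ (and $u<1$ when $u_0\le1$), and the half-line argument goes through unchanged. What is needed is a band whose closure lies inside $(0,1)$, for instance $|u-\Phi_c(0)|\le\frac12 m$. This is presumably the intended statement. It is also all that the later uses require (Lemma~\ref{lem11} and the proof of Lemma~\ref{lem15}, with $\hat\varepsilon$ adjusted accordingly), since there only values of $u$ near $\Phi_c(0)$ matter. With that $D$, your Step~1 with $a=\Phi_c(0)-\frac m2$ and $b=\Phi_c(0)+\frac m2$, followed by Step~2, is a complete proof.
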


\begin{lem}[Decay of x-derivatives]\label{lem10}
Let $u(x,z,t)$ be a solution of (\ref{MRea-Diff}). Then for any constant $R>0$, it holds that
\begin{equation*}
\lim_{t\to\infty}\sup_{x\in\mathbb{R}^{n-1},|z|\le R}|u_{x_{i}}(x,z,t)|=0,\quad\lim_{t\to\infty}\sup_{x\in\mathbb{R}^{n-1},|z|\le R}|u_{x_{i}x_{j}}(x,z,t)|=0,
\end{equation*}
for each $1\le i,j\le n-1$.
\end{lem}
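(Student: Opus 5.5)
We argue by contradiction together with the compactness built into the notion of $\omega$-limit points. Fix $R>0$ and suppose the first limit fails. Then there are $\varepsilon>0$, an index $i$, and sequences $t_k\to\infty$, $x_k\in\mathbb{R}^{n-1}$, $z_k\in[-R,R]$ with $|u_{x_i}(x_k,z_k,t_k)|\ge\varepsilon$. Passing to a subsequence, we may assume $z_k\to z_\infty\in[-R,R]$.

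Next we need compactness of the shifted functions $u_k(x,z,t):=u(x+x_k,z,t+t_k)$. Since $u$ is bounded (by comparison, $0\le u\le\max\{1,\|u_0\|_{L^\infty}\}$) and $f\in C^1$, interior parabolic Schauder estimates give uniform bounds on $u$ in $C^{2+\theta,1+\theta/2}$ on sets $\mathbb{R}^n\times[t_0,\infty)$, with $t_0>0$ and constants independent of the shift. Strictly speaking, full Schauder regularity requires a Hölder $f$, and $f\in C^1$ gives that. By Arzelà--Ascoli and a diagonal argument, a subsequence of $u_k$ converges in $C^{2,1}_{loc}(\mathbb{R}^n\times\mathbb{R})$; this convergence holds for each compact time interval, since $t_k\to\infty$. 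The limit $w$ is therefore an $\omega$-limit point of $u$.

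By the Corollary (characterization of $\omega$-limit points), $w(x,z,t)=\Phi_c(z-z_0)$ for some $z_0$, so $w_{x_i}\equiv0$ and $w_{x_ix_j}\equiv0$. Local $C^{2,1}$ convergence at the point $(0,z_k,0)$, combined with $z_k\to z_\infty$ and uniform continuity of the derivatives on compacts, gives $u_{x_i}(x_k,z_k,t_k)=\partial_{x_i}u_k(0,z_k,0)\to w_{x_i}(0,z_\infty,0)=0$. This contradicts $|u_{x_i}(x_k,z_k,t_k)|\ge\varepsilon$. The argument for $u_{x_ix_j}$ is identical, using that $C^{2,1}_{loc}$ convergence controls second spatial derivatives.

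The only delicate step is the uniform regularity needed for $C^{2,1}_{loc}$ compactness. I would derive it as follows. First, $L^p$ estimates give $W^{2,1}_p$ bounds on unit cylinders, uniform in the center because $u$ and $f(u)$ are bounded. Sobolev embedding then makes $u$ Hölder continuous, hence $f(u)$ Hölder continuous. Schauder estimates then bound $u$ in $C^{2+\theta,1+\theta/2}$ uniformly, and this is exactly the equicontinuity of second derivatives that the compactness argument requires. The bound $|z|\le R$ is essential here, since it is what allows us to extract a convergent subsequence $z_k\to z_\infty$.
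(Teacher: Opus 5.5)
Your argument is correct and is essentially the proof the paper intends. The paper omits it and defers to Matano--Nara, where decay of the $x$-derivatives is proved the same way: argue by contradiction, use parabolic estimates to extract a $C^{2,1}_{loc}$-limit of $u(x+x_k,z,t+t_k)$ that is an $\omega$-limit point, and apply the characterization $w=\Phi_c(z-z_0)$ to get $w_{x_i}\equiv w_{x_ix_j}\equiv 0$. Your bootstrap ($W^{2,1}_p$ estimates, then H\"older continuity of $f(u)$, then Schauder) correctly supplies the uniform regularity, and you are right that the restriction $|z|\le R$ is what allows $z_k\to z_\infty$, since $\omega$-limit points involve no shift in $z$.
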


\section{Level set of the solutions}
As we mention in the previous section, we only give statements of lemmas  and omit their proof.
\begin{lem}[Level set]\label{lem11}
Let $u(x,z,t)$ be a solution of ($\ref{MRea-Diff}$) and $T>0$ be as defined in Corollary 2. Then there exists a smooth bounded function $\Gamma(x,t)$ such that
\begin{equation*}
u(x,z,t)=\Phi_{c}(0)\quad\mbox{if and only if}\quad z=\Gamma(x,t),
\end{equation*}
for any $(x,t)\in\mathbb{R}^{n-1}\times[T,\infty)$. Furthermore the following estimates hold:
\begin{enumerate}
\item For each $1\le i,j \le n-1$,
\begin{equation*}
\lim_{t\to\infty}\sup_{x\in\mathbb{R}^{n-1}}|\Gamma_{x_{i}}(x,t)|=0,\quad\lim_{t\to\infty}\sup_{x\in\mathbb{R}^{n-1}}|\Gamma_{x_{i}x_{j}}(x,t)|=0,
\end{equation*}
\item There exists a constant $M>0$ such that, for each $1\le i,j,k \le n-1$,
\end{enumerate}
\begin{equation*}
\sup_{x\in\mathbb{R}^{n-1}}|\Gamma_{x_{i}x_{j}x_{k}}|\le M,\quad \mbox{for}\quad t\ge T.
\end{equation*}
\end{lem}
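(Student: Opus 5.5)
We would prove Lemma~\ref{lem11} by combining the uniform bounds of Proposition~\ref{lem3}, the strict monotonicity of Corollary~\ref{lem9}, the implicit function theorem, and the derivative decay of Lemma~\ref{lem10}.

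\textbf{Step 1: locating the level set.} By Proposition~\ref{lem3}, after enlarging $T$, the set $\{u=\Phi_c(0)\}$ at times $t\ge T$ lies in a fixed strip $|z|\le R$. Indeed, \eqref{upper} forces $u<\Phi_c(0)$ for $z$ large, uniformly in $x$. Likewise \eqref{lower} forces $u>\Phi_c(0)$ for $z$ very negative, uniformly in $x$.

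\textbf{Step 2: existence, uniqueness and smoothness of $\Gamma$.} For fixed $(x,t)$ with $t\ge T$, the map $z\mapsto u(x,z,t)$ is continuous. It exceeds $\Phi_c(0)$ at $z=-R$ and is below $\Phi_c(0)$ at $z=R$, so it has a zero of $u-\Phi_c(0)$. Whenever $u$ is near $\Phi_c(0)$ we are in the set $D$ of Corollary~\ref{lem9}, where $-u_z\ge\kappa>0$.

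Uniqueness follows from this. Suppose $z_1<z_2$ were two zeros. On the connected component of $D$ containing $z_1$ the function $u$ is strictly decreasing. Hence $u$ cannot come back up to $\Phi_c(0)$ without leaving the band $|u-\Phi_c(0)|\le\min(\cdot)$ from below. Re-entering that band from below requires increasing through it, which contradicts $u_z<0$ there. So the zero is unique; call it $\Gamma(x,t)$.

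Boundedness follows from $|\Gamma|\le R$. Smoothness follows from the implicit function theorem, since $u$ is smooth for $t>0$ by parabolic regularity ($f\in C^1$ gives $C^{2+\alpha,1+\alpha/2}$; higher smoothness in $x$ follows by differentiating the equation in $x$, which is translation invariant in $x$).

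\textbf{Step 3: the derivative formulas.} Differentiating $u(x,\Gamma(x,t),t)=\Phi_c(0)$ gives
\begin{equation*}
\Gamma_{x_i}=-\frac{u_{x_i}}{u_z},\qquad
\Gamma_{x_ix_j}=-\frac{u_{x_ix_j}+u_{x_iz}\Gamma_{x_j}+u_{x_jz}\Gamma_{x_i}+u_{zz}\Gamma_{x_i}\Gamma_{x_j}}{u_z},
\end{equation*}
all evaluated at $z=\Gamma(x,t)$, with $|\Gamma|\le R$. Since $|u_z|\ge\kappa$ there, Lemma~\ref{lem10} gives $\Gamma_{x_i}\to0$ uniformly.

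\textbf{Step 4: the second and third derivative estimates.} For $\Gamma_{x_ix_j}$ we also need $u_{x_iz}\to0$. The terms $u_{zz}\Gamma_{x_i}\Gamma_{x_j}$ vanish because $u_{zz}$ is bounded while $\Gamma_{x_i}\to 0$.

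The main obstacle is $u_{x_iz}\to0$, which is not stated in Lemma~\ref{lem10}. We would obtain it by interpolation: $u_{x_i}\to0$ on $|z|\le R+1$ together with uniform $C^{2+\alpha}$ bounds on $u$ for $t\ge1$ (interior parabolic Schauder estimates) gives $\nabla u_{x_i}\to0$. Alternatively it follows from the $\omega$-limit characterization, because every limit is $\Phi_c(z-z_0)$, whose $x$-derivatives vanish, and $C^{2,1}_{loc}$ convergence passes to this mixed derivative.

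For the third derivatives, differentiate once more. This expresses $\Gamma_{x_ix_jx_k}$ through third derivatives of $u$ divided by powers of $u_z$. Uniform bounds on third spatial derivatives of $u$ for $t\ge T$ follow from Schauder estimates applied to the equation satisfied by $u_{x_k}$, which is linear with coefficient $f'(u)$. This step needs $f'$ to have some Hölder continuity, or a difference-quotient argument in $x$ that exploits translation invariance. This regularity bookkeeping is the technical point we expect to require the most care; the rest follows \cite{MR2837694} verbatim.
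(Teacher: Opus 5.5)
Your strategy is the one the paper relies on when it omits the proof and defers to \cite{MR2837694}. You confine the level set to a strip via Proposition~\ref{lem3}, obtain existence by the intermediate value theorem, and get uniqueness and regularity from Corollary~\ref{lem9} and the implicit function theorem. You then read off (i) from the formulas for $\Gamma_{x_i}$, $\Gamma_{x_ix_j}$ together with Lemma~\ref{lem10}. One simplification: you do not need $u_{x_iz}\to0$ at all. In your formula for $\Gamma_{x_ix_j}$ the mixed derivatives appear only multiplied by $\Gamma_{x_j}$ or $\Gamma_{x_i}$, which already tend to $0$. Moreover $u_{x_iz}$ and $u_{zz}$ are uniformly bounded for $t\ge1$ by the $C^{2+\alpha,1+\alpha/2}$ estimate you quote.

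The real weak point is regularity, i.e.\ the word ``smooth'' and estimate (ii). Your assertion in Step~2 that higher smoothness in $x$ follows by differentiating the equation in $x$ is not correct for $f\in C^1$. The function $w=u_{x_k}$ (and likewise $u_z$) solves $w_t=\Delta w+cw_z+f'(u)w$, whose coefficient is merely continuous. $L^p$ theory then gives $w\in W^{2,1}_{p,\mathrm{loc}}$, hence $w\in C^{1+\alpha}$, but no $L^\infty$ bound on $D^2w$. So third derivatives of $u$, and with them $\Gamma_{x_ix_jx_k}$, are not controlled.

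Your difference-quotient fallback does not repair this. Translation invariance makes $D^h_l w$ satisfy the same linear equation plus the source term $\bigl(D^h_l f'(u)\bigr)w$, which has no bound uniform in $h$ when $f'$ is merely continuous. What is actually needed is H\"older continuity of $f'$ (e.g.\ $f\in C^{1,\alpha}$). Then Schauder estimates for the equations of $u_{x_k}$ and $u_z$ give uniform $C^{2+\alpha}$ bounds on $\nabla u$ for $t\ge1$, and (ii) follows from your differentiated identity and $|u_z|\ge\kappa$.

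The paper assumes only $f\in C^1$ yet asserts that $\Gamma$ is smooth, so it tacitly uses this same extra regularity. This is therefore a gap in the stated hypotheses rather than in your strategy.
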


\begin{lem}\label{lem12}
Let $u(x,z,t)$ be a solution of (\ref{MRea-Diff}) and let $\Gamma(x,t)$ be as defined in Lemma \ref{lem11}. Then it holds that
\begin{equation*}
\lim_{t\to\infty}\sup_{(x,z)\in\mathbb{R}^{n}}\left|u(x,z,t)-\Phi_{c}\left(x-\Gamma(x,t)\right)\right|=0
\end{equation*}
\end{lem}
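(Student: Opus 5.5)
The plan is to argue by contradiction using the $\omega$-limit characterization, with the level set $\Gamma$ of Lemma~\ref{lem11} pinning down the shift. (In the statement, $\Phi_c(x-\Gamma(x,t))$ is clearly meant to read $\Phi_c(z-\Gamma(x,t))$.) Suppose the claim fails. Then there are $\varepsilon>0$ and sequences $t_i\to\infty$, $(x_i,z_i)\in\mathbb{R}^n$ with
\begin{equation*}
|u(x_i,z_i,t_i)-\Phi_c(z_i-\Gamma(x_i,t_i))|\ge\varepsilon .
\end{equation*}
We distinguish whether $z_i$ stays bounded or not.

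\emph{Case $|z_i|\to\infty$.} Since $\Gamma$ is bounded for $t\ge T$, Proposition~\ref{lem3} gives, for $t$ large,
\begin{equation*}
\Phi_c(z-z_1)-\eta\le u(x,z,t)\le\Phi_c(z-z_0)+\eta
\end{equation*}
for arbitrary $\eta>0$. If $z_i\to+\infty$, both $u(x_i,z_i,t_i)$ and $\Phi_c(z_i-\Gamma(x_i,t_i))$ are then $O(\eta)$ plus terms tending to $0$. If $z_i\to-\infty$, both are within $O(\eta)$ of $1$. Choosing $\eta<\varepsilon/4$ gives a contradiction in either subcase.

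\emph{Case $z_i$ bounded.} Passing to a subsequence, we may assume $z_i\to\bar z$ and $\Gamma(x_i,t_i)\to\bar\Gamma$. By parabolic estimates (uniform bounds on $u$ and its derivatives), $u(x+x_i,z,t+t_i)$ converges in $C^{2,1}_{loc}$ to some $\omega$-limit point $w$. The Corollary characterizing $\omega$-limit points gives $w=\Phi_c(z-z^*)$ for some $z^*$. Since $u(x_i,\Gamma(x_i,t_i),t_i)=\Phi_c(0)$ and the convergence is locally uniform, we get $w(0,\bar\Gamma,0)=\Phi_c(0)$. Strict monotonicity of $\Phi_c$ then forces $z^*=\bar\Gamma$. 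Hence
\begin{equation*}
u(x_i,z_i,t_i)\to\Phi_c(\bar z-\bar\Gamma),\qquad \Phi_c(z_i-\Gamma(x_i,t_i))\to\Phi_c(\bar z-\bar\Gamma),
\end{equation*}
which contradicts the assumed gap $\varepsilon$.

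The main point needing care is the compactness step: one needs $u(\cdot+x_i,\cdot,\cdot+t_i)$ to be precompact in $C^{2,1}_{loc}$, which follows from standard interior Schauder estimates since $u$ is bounded and $f\in C^1$. One also needs the bounds of Proposition~\ref{lem3} uniformly in $x$ to handle the tails, and there the boundedness of $\Gamma$ from Lemma~\ref{lem11} is essential.
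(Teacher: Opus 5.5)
Your proposal is correct and is essentially the argument the paper intends: the paper omits this proof and refers to Matano--Nara, whose argument is the same contradiction scheme. That scheme uses Proposition~\ref{lem3} together with the boundedness of $\Gamma$ to rule out $|z_i|\to\infty$, and the characterization of $\omega$-limit points together with the normalization $u(x_i,\Gamma(x_i,t_i),t_i)=\Phi_c(0)$ to identify the shift when $z_i$ stays bounded. You also correctly read the intended profile as $\Phi_c(z-\Gamma(x,t))$.
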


\section{Construction of supersolutions and subsolutions}
In this section, we construct supersolutions and subsolutions to prove Theorem \ref{thm1} (ii).
For this purpose, let $V(x,t)$ be 
\begin{equation*}
\left\{
\begin{aligned}
&V_{t}=\Delta_{x}V+\frac{c^{*}}{2}|\nabla_{x} V|\\
&V(x,0)=V_{0}(x)
\end{aligned}
\right.
\end{equation*}

Then, the following holds

\begin{lem}\label{lem13}
For any $M>0,\varepsilon\in(0,1]$, there exists $\delta>0$ and smooth functions $p(t),q(t)$ such that
\begin{equation*}
p(0)>0,\; q(0)=0,\; 0\le p(t), q(t)\le\varepsilon\;(t\ge0)
\end{equation*}
and, if $\|V_{0}\|\le M, \|\nabla_{x}V_{0}\|\le\delta$, 
\begin{equation*}
u^{+}(x,z,t)\coloneqq\Phi_{c^{*}}\left(\frac{z-V(x,t)}{\sqrt{1+|\nabla_{x}V|^{2}}}-q(t)\right)+p(t)\psi(z)
\end{equation*}
becomes a supersolution.
\end{lem}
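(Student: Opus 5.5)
We plan to follow the Fife--McLeod/Matano--Nara scheme, with the tail correction of Rothe (Lemma~\ref{add_B}) in place of the bistable one. Throughout, $V$ solves the moving-frame equation $V_t=\Delta_x V+\frac{c^*}{2}|\nabla_x V|^2$, with no drift because $z=y-c^*t$. Write $S:=\sqrt{1+|\nabla_x V|^2}$, $\xi:=(z-V)/S-q(t)$, and $\mathrm{L}[u]:=u_t-\Delta u-c^*u_z-f(u)$. The ansatz is
\begin{equation*}
p(t)=p_0e^{-\beta t},\qquad q(t)=\int_0^t\bigl(Kp(s)+g(s)\bigr)\,ds ,
\end{equation*}
where $g\ge0$ is an integrable bound on the curvature errors obtained from Lemma~\ref{lem2}.

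The first step is a direct computation of $\mathrm{L}[u^+]$. Using $\Phi''+c^*\Phi'+f(\Phi)=0$ at $\xi$, we expect
\begin{equation*}
\mathrm{L}[u^+]=\Phi'(\xi)\Bigl(-q'-\tfrac{V_t-\Delta_xV}{S}+c^*\bigl(1-\tfrac1S\bigr)+E\Bigr)+\Phi''(\xi)\bigl(1-|\nabla\xi|^2\bigr)+p'\psi-p\psi''-c^*p\psi'-\bigl[f(\Phi+p\psi)-f(\Phi)\bigr].
\end{equation*}
The three terms $-(V_t-\Delta_xV)/S+c^*(1-1/S)$ cancel to leading order, since $1-1/S=\tfrac12|\nabla_xV|^2+O(|\nabla_xV|^4)$; this is exactly why the coefficient $c^*/2$ appears. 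The remaining error $E$ and $1-|\nabla\xi|^2$ consist of terms like $(z-V)\,|\nabla_xV|\,|D^2_xV|$, $|D^2_xV|$, $|\nabla_xV|^4$ and $(z-V)\,|\nabla_xV\cdot\nabla_x V_t|$. Each factor $(z-V)$ can be converted into $\xi+q$, and $\xi\Phi'(\xi)$, $\xi\Phi''(\xi)$ are bounded: for the pushed front, $\Phi'\sim(\alpha\xi+\dots)e^{\lambda_-\xi}$ by \eqref{asym1}, and the decay is exponential at $-\infty$. Hence, by Lemma~\ref{lem2}, we expect the whole error to be bounded by $|\Phi'(\xi)|\,g(t)$ with
\begin{equation*}
g(t)\le C(\delta)\min\{1,(1+t)^{-3/2}\},\qquad \int_0^\infty g\to0 \ \mbox{ as } \ \delta\to0.
\end{equation*}
The relevant constants $C_1,C_2$ tend to zero as $\|\nabla_x V_0\|\to0$, and $C_3,C_4$ are controlled via smoothing after a short time. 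The point is that every error term carries the factor $\Phi'(\xi)$, so it can be absorbed by $-q'\Phi'(\xi)\ge g|\Phi'(\xi)|$, since $\Phi'<0$.

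The second step controls the $p$-terms by splitting into three regions. In the region $z\ge Z$, with $Z$ large so that $\psi(z)=e^{\lambda_1 z}$ there, the terms give $p\,e^{\lambda_1 z}\bigl(-\beta-\lambda_1^2-c^*\lambda_1-f'(0)+o(1)\bigr)$. This is positive for small $\beta$ provided $\lambda_1$ lies strictly between $\lambda_-$ and $\lambda_+$. We may take $\lambda_1>\lambda_-$, since enlarging $\lambda_1$ below $\lambda_+$ only weakens \eqref{init3}. Here $|V|\le M$ and $|q|\le\varepsilon$ guarantee that $\Phi(\xi)$ is small when $z$ is large, so $f(\Phi+p\psi)-f(\Phi)\approx f'(0)p\psi$; this is where the dependence on $M$ enters. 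In the region $z\le -Z$, both $\Phi(\xi)$ and $\Phi+p\psi$ are near $1$ and $\psi\equiv1$, so $f'<0$ gives a margin $\ge p\,(\mu-\beta)>0$. In the compact middle region $|z|\le Z$, $|\xi|$ is bounded, so $-\Phi'(\xi)\ge\kappa>0$. There the $p$-terms are at most $C_5p$, and choosing $K\ge C_5/\kappa$ lets $-Kp\,\Phi'$ absorb them.

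Finally we fix $p_0$ and $\delta$ small so that $q(\infty)\le Kp_0/\beta+\int_0^\infty g\le\varepsilon$ and $p_0\le\varepsilon$. The main obstacle is the first step: the bookkeeping showing that every curvature error is genuinely $O(|\Phi'(\xi)|)$ times an integrable function whose integral vanishes as $\delta\to0$. This includes the terms with the unbounded factor $(z-V)$, which rely on the weighted bounds $|\xi\Phi'(\xi)|,|\xi\Phi''(\xi)|\le C$. It also includes the early-time behaviour, where the $C_3,C_4$ bounds of Lemma~\ref{lem2} require $W^{3,\infty}$ data. If that fails, we would first regularize $V_0$ by running the $V$-flow for a unit time, or else allow $g$ to be bounded near $t=0$ and absorb it using $p(0)>0$ on the compact region.
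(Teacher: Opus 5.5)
There is a genuine gap in your first step, and it is exactly what rules out the choice $p=p_0e^{-\beta t}$. Consider the terms carrying $(z-V)$ or $(z-V)^2$. These are $(z-V)(\partial_t-\Delta_x)(S^{-1})\,\Phi'(\xi)$, and, from $1-|\nabla\xi|^2$ (whose $|\nabla_xV|^2$-parts cancel exactly), $\left(2(z-V)S^{-1}\nabla_xV\cdot\nabla_x(S^{-1})-(z-V)^2|\nabla_x(S^{-1})|^2\right)\Phi''(\xi)$. They have the form $h(t)\,\eta^k\Phi^{(j)}(\xi)$ with $\eta=(z-V)/S=\xi+q$ and $k\in\{1,2\}$.

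The bounds $|\xi\Phi'(\xi)|,\ |\xi^2\Phi''(\xi)|\le C$ only give $|\mathrm{err}|\le C\,h(t)$. They do \emph{not} give $|\mathrm{err}|\le C\,|\Phi'(\xi)|\,h(t)$, because the ratio to $|\Phi'(\xi)|$ is of order $|\eta|^k$, which is unbounded in $z$. So in both tails, where $\Phi'(\xi)\to0$ exponentially, $-q'\Phi'(\xi)$ with integrable $q'$ cannot absorb these terms. Your $p$-terms cannot either. With $p=p_0e^{-\beta t}$ the margins you produce ($p\,e^{\lambda_1 z}$ for $z\ge Z$, $p(\mu-\beta)$ for $z\le -Z$) decay exponentially in $t$, while $h(t)$ decays only polynomially. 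With only the rates of Lemma~\ref{lem2} at hand, the exponential margin fails on a set of the form $Z\le|\xi|\le a t$. Covering that set by $-q'\Phi'(\xi)$ would need $q'(t)\ge c\,t^{-1}$, which is not integrable, so $q$ could not stay below $\varepsilon$.

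The missing idea is that the tail errors must be absorbed by the $p\psi$-term, not by $q'$, so $p$ must decay no faster than the curvature errors. This is the paper's route.

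\begin{enumerate}
\item[(a)] It bounds the whole curvature part by $|I|\le P(t)\,(|\Phi'|+|\eta||\Phi'|+\eta^2|\Phi''|)\le S\,P(t)$, where $P(t)=\min\{C_2t^{-2},C_1\}$ and $C_1\to0$ as $\delta\to0$.
\item[(b)] It takes a smooth $p$ with $P\le Kp\le 2P$ and $|p'|/p\le K$, and sets $q=C_0\int_0^t p$.
\item[(c)] For $z\le -R$ the weighted error is at most $P(t)$. It is absorbed by the margin $Kp$ coming from $-f'\ge 2K$ near $1$.
\item[(d)] For $z\ge R$ it is absorbed by the margin $Lp\,e^{\lambda z}$ from the linearization at $0$. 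This requires $(|\eta|+\eta^2)|\Phi'(\xi)|\le L e^{\lambda z}$ there. Since $\xi\approx z/S$, the exponent of $\psi$ must satisfy $\lambda>\lambda_-/\sqrt{1+\delta^2}$, not merely $\lambda>\lambda_-$; given $\lambda$, this constrains $\delta$.
\item[(e)] Only on the compact region $|z|\le R$ is $-C_0p\,\Phi'(\xi)$ used.
\end{enumerate}

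Smallness of $q(\infty)$ then follows from $\int_0^\infty P\,dt=2\sqrt{C_1C_2}\to0$ as $\delta\to0$, even though $C_2$ depends on $M$ and does not shrink. Once $p$ is changed this way, the rest of your outline (three regions, $\lambda_1\in(\lambda_-,\lambda_+)$, positivity of $-\Phi'$ in the middle) fits.
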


\begin{proof}
$\mathrm{L}[u]=I+J$, where $I,J$ are
\begin{equation*}
I\coloneqq(I_{0}-I_{2})\Phi_{c^{*}}^{\prime}+(I_{1}-3I_{3})\eta\Phi_{c^{*}}^{\prime}-2I_{2}\eta\Phi^{\prime\prime}_{c^*}-I_{3}\eta^{2}\Phi_{c^{*}}^{\prime\prime}
\end{equation*}
and $I_{0},I_{1},I_{2},I_{3}$ are 
\begin{align*}
&I_{0}\coloneqq-\frac{V_{t}+c^{*}}{\sqrt{1+|\nabla_{x}V|^{2}}}+\mathrm{div}\left(\frac{\nabla_{x}V}{\sqrt{1+|\nabla_{x}V|^{2}}}\right)+c^{*}\\
&I_{1}\coloneqq-\sum_{i=1}^{n-1}\frac{V_{x_{i}}V_{x_{i}t}}{1+|\nabla_{x}V|^{2}}+\sum_{i,j=1}^{n-1}\frac{V_{x_{i}x_{j}}^{2}+V_{x_{j}}V_{x_{i}x_{i}x_{j}}}{1+|\nabla_{x}V|^{2}}\\
&I_{2}\coloneqq\sum_{i,j=1}^{n-1}\frac{V_{x_{i}}V_{x_{j}}V_{x_{i}x_{j}}}{(1+|\nabla_{x}V|^{2}|)^{\frac{3}{2}}}\\
&I_{3}\coloneqq\sum_{i=1}^{n-1}\left(\frac{\sum_{j=1}^{n-1}V_{x_{j}}V_{x_{i}x_{j}}}{1+|\nabla_{x}V|^{2}}\right)^{2}.
\end{align*}
$I_{0},J$ is calculated as follows:
\begin{align*}
I_{0}=&\frac{1}{\sqrt{1+|\nabla_{x}V|^{2}}}\left(-V_{t}+\Delta_{x}V+\frac{c^{*}}{2}|\nabla_{x}V|\right)\\
&-\frac{c^{*}}{2\sqrt{1+|\nabla_{x}V|^{2}}(\sqrt{1+|\nabla_{x}V|}+1)^{2}}-\sum_{i,j=1}^{n-1}\frac{V_{x_{i}}V_{x_{j}}V_{x_{i}x_{j}}}{(1+|\nabla_{x}V|^{2})^{\frac{3}{2}}}\\
=&-\frac{c^{*}}{2\sqrt{1+|\nabla_{x}V|^{2}}(\sqrt{1+|\nabla_{x}V|}+1)^{2}}-\sum_{i,j=1}^{n-1}\frac{V_{x_{i}}V_{x_{j}}V_{x_{i}x_{j}}}{(1+|\nabla_{x}V|^{2})^{\frac{3}{2}}}
\end{align*}
\begin{equation*}
J\coloneqq\left(-\Phi_{c^{*}}^{\prime}\frac{q^{\prime}(t)}{p(t)\psi(z)}+\frac{p^{\prime}(t)}{p(t)}-\frac{\psi^{\prime\prime}(z)+c^{*}\psi^{\prime}(z)}{\psi(z)}-\int_{0}^{1}f^{\prime}(\Phi_{c^{*}}+\tau p(t)\psi(z))\mathrm{d}\tau\right)\cdot p(t)\psi(z)
\end{equation*}
Thus, there exists $S>0,C_{2}\ge1$ such that, for any $C_{1}>0$, there exists $\delta>0$ such that, if $\|\nabla_{x}V_{0}\|_{W^{1,\infty}}\le\delta$, then
\begin{gather*}
|I|\le P(t)(|\Phi_{c^{*}}^{\prime}|+|\eta||\Phi_{c^{*}}^{\prime}|+|\eta|^{2}|\Phi_{c^{*}}^{\prime\prime}|)\le SP(t)\\
P(t)\coloneqq\min\{C_{2}t^{-2},C_{1}\}.
\end{gather*}
By these, we can prove that $u^{+}$ is a supersolution by showing $J\ge |I|$\\
We define $K\in(0,1],L>0,\delta>0,R>0,K_{1}>0,K_{2}\in(0,1], R_{1}>0$ such that
\begin{gather*}
0<K<|\lambda^{2}+c^{*}\lambda+f^{\prime}(0)|\\
f^{\prime}(s)\ge2K>0\;(s\in[1-2\varepsilon,1+\varepsilon])\\
-K-\lambda^{2}-c^{*}\lambda-f^{\prime}(0)>L>0\\
\frac{\lambda_{-}(c^{*})}{\sqrt{1+\delta^{2}}}<\lambda\\
\left|\frac{\psi^{\prime\prime}+c^{*}\psi^{\prime}}{\psi}\right|\le K_{1}\\.
\end{gather*}
For any $z>R$, 
\begin{gather*}
e^{\lambda z}<\frac{1}{2}\\
-K-\lambda^{2}-c^{*}\lambda-\int_{0}^{1}f^{\prime}(\Phi+p\psi\tau)\mathrm{d}\tau\ge L>0\\
|\Phi^{\prime}|+|\eta||\Phi^{\prime}|+|\eta|^{2}|\Phi^{\prime\prime}|\le Le^{\lambda z}.
\end{gather*}
For any $z<-R$,
\begin{gather*}
\Phi\in[1-\varepsilon,1]\\
|\Phi^{\prime}|+|\eta||\Phi^{\prime}|+|\eta|^{2}|\Phi^{\prime\prime}|\le1.
\end{gather*}
For any $-R\le z\le R$,
\begin{gather*}
\psi\ge K_{2}>0\\
-R_{1}\le \eta-q(t)\le R_{1}.
\end{gather*}
Based on these, we define $C_{1}>0,C_{0}>1$ as follows:
\begin{equation*}
C_{1}\coloneqq\frac{K^{2}\varepsilon^{2}}{16C_{2}C_{0}^{2}}, C_{0}\coloneqq \max\left\{1,\frac{SK/K_{2}+K_{1}+K+\|f^{\prime}\|_{L^{\infty}[0,1]}}{\min_{|z|\le R_{1}|\Phi^{\prime}(z)|}}\right\}
\end{equation*}
Then, we take $p,q\in C^{\infty}[0,\infty)$ such that
\begin{equation*}
P(t)\le Kp(t)\le 2P(t),\hfill K|p^{\prime}(t)|\le2|P^{\prime}(t)|,\hfill q(t)=C_{0}\int_{0}^{t}p(s)\mathrm{ds}.
\end{equation*}
For these, the following holds
\begin{equation*}
p(0)\ge \frac{K\varepsilon}{16C_{2}C_{0}}>0,\hfill 0<p(t)\le\frac{K\varepsilon^{2}}{8C_{2}C_{0}^{2}},\hfill0\le q(t)\le C_{0}\int_{0}^{\infty}p(s)\mathrm{ds}\le\varepsilon.
\end{equation*}
Now, we prove that $u^{+}$ is a supersolution.\\
For any $z\le-R$,
\begin{align*}
J&=\left(-\Phi^{\prime}\frac{q^{\prime}}{p\psi}+\frac{p^{\prime}}{p}-\frac{\psi^{\prime\prime}+c^{*}\psi^{\prime}}{\psi}-\int_{0}^{1}f^{\prime}(\Phi+p\psi\tau)\mathrm{d}\tau\right)p\psi\\
&\ge\left(\frac{p^{\prime}}{p}-\int_{0}^{1}f^{\prime}(\Phi+p\psi\tau)\mathrm{d}\tau\right)p\\
&\ge Kp\ge|I|
\end{align*}
, where we use the following:
\begin{equation*}
\sup_{t\ge0}\frac{|p^{\prime}(t)|}{p(t)}\le\sup_{t\ge0}\frac{|P^{\prime}(t)|}{Kp(t)}\le\sup_{t\ge0}\frac{2|P^{\prime}(t)|}{P(t)}=\frac{K\varepsilon}{C_{2}C_{0}}\le K.
\end{equation*}
For any $z\ge R$,
\begin{align*}
J&\ge\left(-K-\lambda^{2}-c^{*}\lambda-\int_{0}^{1}f^{\prime}(\Phi+p\psi\tau)\mathrm{d}\tau\right)p\psi\\
&\ge Lpe^{\lambda z}\ge\frac{L}{K}P(t)e^{\lambda z}\ge LP(t)e^{\lambda z}\ge |I|
\end{align*}
For any $-R\le z\le R$,
\begin{align*}
J&\ge\left(-\Phi^{\prime}\frac{C_{0}}{\psi}+\frac{p^{\prime}}{p}-K_{1}-\|f^{\prime}\|_{L^{\infty}[0,1]}\right)p\psi\\
&\ge\left(-\Phi^{\prime}C_{0}-K-K_{1}-\|f^{\prime}\|_{L^{\infty}[0,1]}\right)K_{2}p\\
&\ge SKp(t)\ge SP(t)\ge |I|.
\end{align*}
Therefore, $u^{+}$ is a supersolution.
\end{proof}

subsolutions are as follows:

\begin{lem}\label{lem14}
For any $M>0,\varepsilon\in(0,1]$, there exists $\delta>0$ and smooth functions $p(t),q(t)$ such that
\begin{equation*}
p(0)>0,\; q(0)=0,\; 0\le p(t), q(t)\le\varepsilon\;(t\ge0)
\end{equation*}
and, if $\|V_{0}\|\le M, \|\nabla_{x}V_{0}\|\le\delta$, 
\begin{equation*}
u^{-}(x,z,t)\coloneqq\Phi_{c^{*}}\left(\frac{z-V(x,t)}{\sqrt{1+|\nabla_{x}V|^{2}}}+q(t)\right)-p(t)\psi(z)
\end{equation*}
becomes a supersolution.
\end{lem}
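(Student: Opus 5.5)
The argument mirrors the proof of Lemma~\ref{lem13}, with the signs reversed; the goal is to show $\mathrm{L}[u^-]\le 0$ (the statement says ``supersolution'', but since $u^-$ is the lower comparison function, what is needed is a subsolution). Write $\eta:=\frac{z-V}{\sqrt{1+|\nabla_x V|^2}}+q(t)$. A direct computation, identical to the one for $u^+$ except for the sign of $q$ and $p$, gives $\mathrm{L}[u^-]=I-J^-$. Here $I$ is the same geometric term as before (with $\eta$ now involving $+q$), and
\begin{equation*}
J^-:=\left(-\Phi_{c^*}'\frac{q'(t)}{p(t)\psi(z)}+\frac{p'(t)}{p(t)}-\frac{\psi''+c^*\psi'}{\psi}-\int_0^1 f'(\Phi_{c^*}-\tau p\psi)\,\mathrm{d}\tau\right)p\psi .
\end{equation*}
The sign of $q'$ flips relative to the $u^+$ case because $\eta$ contains $+q$. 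This flip is compensated by subtracting the $p\psi$ term, so the structure is again ``show $J^-\ge |I|$''. The estimate $|I|\le SP(t)$ with $P(t)=\min\{C_2t^{-2},C_1\}$ carries over verbatim from Lemma~\ref{lem2}. Taking $\delta$ small makes $C_1$ as small as needed, and $I_0$ is again controlled because $V$ solves the drift equation.

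Next I fix the same constants $K,L,K_1,K_2,R,R_1$ and split into three regions in $z$. I choose $p$ with $P\le Kp\le 2P$, $K|p'|\le 2|P'|$, and $q=C_0\int_0^t p$, exactly as before, so that $p(0)>0$, $q(0)=0$, and $p,q\le\varepsilon$.

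On $z\le -R$ we have $\psi\equiv 1$ and $\Phi_{c^*}-\tau p\in[1-2\varepsilon,1]$. Hence $-\int f'\ge 2K$, and since $|p'|/p\le K$ we get $J^-\ge Kp\ge|I|$. The term $-\Phi'_{c^*}q'/(p\psi)$ is nonnegative because $\Phi'<0$ and $q'>0$.

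On $z\ge R$ we have $\psi=e^{\lambda z}$ and $(\psi''+c^*\psi')/\psi=\lambda^2+c^*\lambda$. The argument $\Phi_{c^*}-\tau p\psi$ is near $0$ and may be slightly negative, where \eqref{condf} gives $f'=f'(0)$. So $-\int f'\approx -f'(0)$, and $J^-\ge Lpe^{\lambda z}$. The condition $\lambda>\lambda_-(c^*)/\sqrt{1+\delta^2}$ ensures that $|\Phi'|(1+|\eta|)+|\eta|^2|\Phi''|\lesssim e^{\lambda z}$, which dominates $|I|$. This is where the pushed-front decay $e^{\lambda_- s}$ and the choice $\lambda<\lambda_1<\lambda_+$ with $\lambda^2+c^*\lambda+f'(0)<0$ are used.

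On the middle region $|z|\le R$, positivity comes from the drift term $-\Phi'_{c^*}C_0/\psi\ge C_0\min_{|s|\le R_1}|\Phi'|$. The choice of $C_0$ then gives $J^-\ge SKp\ge SP\ge |I|$.

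The main obstacle I expect is the far-field region $z\ge R$ for the subsolution. Here $u^-$ may become negative, and I must check two things. First, the substitution $f(u)=f'(0)u$ for $u<0$ keeps the sign of $-\lambda^2-c^*\lambda-\int f'$ uniformly positive. Second, the tilted variable $\eta$, whose slope in $z$ is $1/\sqrt{1+|\nabla_xV|^2}$, still yields $\Phi'$-terms decaying faster than $e^{\lambda z}$. Both require the margin $\lambda>\lambda_-(c^*)/\sqrt{1+\delta^2}$ and $\delta$ small. I would handle them first, before fixing $C_0$ and $C_1$. The remaining steps are then routine copies of Lemma~\ref{lem13}.
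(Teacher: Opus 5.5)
Your proposal is correct and takes the same approach as the paper. The paper gives no separate proof of this lemma, and your argument is the direct mirror of the proof of Lemma~\ref{lem13}: writing $\mathrm{L}[u^-]=I-J^-$, reusing $|I|\le SP(t)$, and running the same three-region estimate with $q=C_0\int_0^t p$ and the same constants. You are also right that ``supersolution'' in the statement should read ``subsolution'', and that the far field $z\ge R$ is handled by \eqref{condf} together with $\lambda_-(c^*)/\sqrt{1+\delta^2}<\lambda<\lambda_+(c^*)$.
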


\section{Proof of Main theorem}
 In this section, we complete the proof of the main theorem by proving the statement Theorem \ref{thm1} (ii).
 
\begin{lem}[Approximation of $\Gamma(x,t)$]\label{lem15}
Let $u(x,z,t)$ be a solution of (\ref{MRea-Diff}) and let $\Gamma(x,t)$ be as defined in Lemma \ref{lem11}. Then for any $\varepsilon>0$, there exists a constant $\tau_{\varepsilon}>0$ such that the function $V(x,t)$ defined by
\begin{equation*}
\left\{
\begin{aligned}
&V_{t}=\Delta_{x}V+\frac{c^{*}}{2}|\nabla_{x} V|,\quad x\in\mathbb{R}^{n-1},t>0\\
&V(x,0)=\Gamma(x,\tau_{\varepsilon}).\qquad\hspace{9.4pt} x\in\mathbb{R}^{n-1}.
\end{aligned}
\right.
\end{equation*}
satisfies
\begin{equation*}
\sup_{x\in\mathbb{R}^{n-1}}|\Gamma(x,t)-V(x,t-\tau_{\varepsilon})|\le\varepsilon,\quad t\ge\tau_{\varepsilon}.
\end{equation*}
\end{lem}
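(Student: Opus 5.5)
The plan is to sandwich $u$ between the comparison functions of Lemmas~\ref{lem13} and \ref{lem14}, both built on a single solution $V$ started from the level set $\Gamma(\cdot,\tau)$. Since $\Phi_{c^*}$ is strictly decreasing, the sandwich will trap $\Gamma$ near $V$ up to the shifts $q$ and the error $p\psi$. Fix $\varepsilon>0$. By Lemma~\ref{lem11}, $\Gamma$ is bounded, say $\|\Gamma(\cdot,t)\|_{L^\infty}\le M$ for $t\ge T$. Also $\nabla_x\Gamma$ and $D_x^2\Gamma$ tend to $0$ uniformly, and $D^3_x\Gamma$ stays bounded. Apply Lemmas~\ref{lem13} and \ref{lem14} with this $M$ and a small $\varepsilon'$, to be fixed in terms of $\varepsilon$ and $\Phi_{c^*}'$ near $0$. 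This yields $\delta$ and functions $p,q$. Then choose $\tau=\tau_\varepsilon\ge T$ large enough that $\|\nabla_x\Gamma(\cdot,t)\|_{W^{1,\infty}}\le\delta$ for all $t\ge\tau$, so that $V_0=\Gamma(\cdot,\tau)$ is admissible and Lemma~\ref{lem2} controls $V$.

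The key step is the initial ordering at $t=\tau$:
\[
u^-(x,z,0)\le u(x,z,\tau)\le u^+(x,z,0).
\]
Lemma~\ref{lem12} gives $|u(x,z,\tau)-\Phi_{c^*}(z-\Gamma(x,\tau))|\le\eta(\tau)$ with $\eta(\tau)\to0$. The profile $\Phi_{c^*}\bigl((z-V_0)/\sqrt{1+|\nabla V_0|^2}\bigr)$ differs from $\Phi_{c^*}(z-V_0)$ by $O(\delta^2)\,|z-V_0|\,|\Phi'|$, which is small uniformly. The perturbation $p(0)\psi(z)$ is positive, but it is only of size $p(0)e^{\lambda_1 z}$ for large $z$. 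So on any compact $z$-range, taking $\tau$ large makes $\eta(\tau)$ smaller than $p(0)\min\psi$, and the ordering holds there.

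The main obstacle is the tail $z\to+\infty$. There $u$ and $\Phi_{c^*}$ are both tiny, and the uniform estimate of Lemma~\ref{lem12} is not enough. The plan is to use instead the exponential bounds from the proof of Proposition~\ref{lem3}: the bound $u\le Ce^{\lambda(z-z_0)}$ comes from the supersolution $w^+$ with $\psi$-tail and $\lambda<\lambda_1<\lambda_+$. One must check that $\Phi_{c^*}$ decays like $e^{\lambda_-z}$, faster than $e^{\lambda_1 z}$ because $\lambda_-<\lambda_1$, so that the tail of $u$ is dominated by $p(0)\psi$. If a uniform bound $u(x,z,t)\le \Phi_{c^*}(z-z_0)+Ce^{-\beta t}\psi(z-z_2)$ does not come out directly, I would first refine the Rothe-type comparison of Lemma~\ref{add_B} so that the $\psi$ coefficient decays, and then match it with $p(0)$.

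For $z\to-\infty$, the lower bound from Proposition~\ref{lem3} puts $u$ near $1$, which handles $u^-$, with $\Phi\le1$ and the $p\psi$ term handled symmetrically. Once the initial ordering holds, the comparison principle gives $u^-(x,z,t-\tau)\le u(x,z,t)\le u^+(x,z,t-\tau)$ for all $t\ge\tau$. Evaluate at $z=\Gamma(x,t)$, where $u=\Phi_{c^*}(0)$, and use $0\le p,q\le\varepsilon'$ together with the strict monotonicity of $\Phi_{c^*}$ on a fixed neighbourhood of $0$. This gives
\[
\frac{|\Gamma(x,t)-V(x,t-\tau)|}{\sqrt{1+|\nabla V|^2}}\le \varepsilon'+C\varepsilon'.
\]
Since $|\nabla V|\le C_1$ is small, choosing $\varepsilon'$ small relative to $\varepsilon$ yields the claim.
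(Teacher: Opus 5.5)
Your proposal is correct, and its comparison part follows the paper's argument. You sandwich $u(\cdot,\cdot,t+\tau)$ between the functions of Lemmas~\ref{lem13} and \ref{lem14} built on $V_0=\Gamma(\cdot,\tau)$. The tail ordering on $z\ge z_0$ comes from the Rothe-type supersolution $w^+$ in the proof of Proposition~\ref{lem3}, and the ordering on $z\le z_0$ comes from Lemma~\ref{lem12}. Your hedge about refining the Rothe comparison is unnecessary. The function $w^+$ already has the decaying coefficient $q_0e^{-\beta t}$, so $u\le \Phi_{c^*}(z-z_1-C)+q_0e^{-\beta(t-T)}\psi(z-z_2)$ holds directly. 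This is exactly what the paper uses to get $u\le p(0)\psi$ for $z\ge z_0$ uniformly in large $t$.

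Where you genuinely differ is in extracting the estimate on $\Gamma$. The paper evaluates $u\le u^+$ at $z=V(x,t-\tau)$ and gets $u(x,V,t)-\Phi_{c^*}(0)\le(\|\Phi_{c^*}'\|_\infty+1)\hat\varepsilon\le K\varepsilon$. It then converts this into $\Gamma-V\le\varepsilon$ by the mean value theorem, using the bound $-u_z\ge K$ on the band $D$ from Corollary~\ref{lem9}. That step requires the segment between $V$ and $\Gamma$ to stay in $D$, which is why $\hat\varepsilon$ also involves $\min\{\Phi_{c^*}(0),1-\Phi_{c^*}(0)\}$. You instead evaluate at $z=\Gamma(x,t)$, where $u=\Phi_{c^*}(0)$ by Lemma~\ref{lem11}, and invert the globally decreasing profile. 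This bypasses Corollary~\ref{lem9} at this stage and needs no check that a segment stays in $D$. Your constant depends only on $\min|\Phi_{c^*}'|$ near $0$ and on $C_1$ from Lemma~\ref{lem2}, rather than on $K$.

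Three small tightenings are needed.
\begin{enumerate}
\item[(1)] You replace $\Phi_{c^*}(z-V_0)$ by $\Phi_{c^*}\bigl((z-V_0)/\sqrt{1+|\nabla_x V_0|^2}\bigr)$, and that error must be smaller than $p(0)\psi(z_0)$. But $p(0)$ is fixed together with $\delta$, so $O(\delta^2)$ is not enough. Use instead that the error is $O(\|\nabla_x\Gamma(\cdot,\tau)\|_\infty^2)$, which tends to $0$ as $\tau\to\infty$ by Lemma~\ref{lem11}.
\item[(2)] $\lambda_-<\lambda_1$ is not a hypothesis. Since $\lambda_1<0$, you may enlarge $\lambda_1$ into $(\lambda_-,\lambda_+)$ without loss of generality.
\item[(3)] For $u^-$ on the tail $z\ge z_0$ you need $u^-(\cdot,\cdot,0)\le 0$ there. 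This requires the stretched profile to decay faster than $\psi$, i.e. $\lambda_-/\sqrt{1+\delta^2}<\lambda_1$, which is guaranteed by the choice of $\delta$ in Lemmas~\ref{lem13}--\ref{lem14}. Also, $\psi\equiv1$ on $(-\infty,0]$ and $\psi$ is decreasing, so Lemma~\ref{lem12} alone handles the whole half-line $z\le z_0$ for both inequalities.
\end{enumerate}
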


\begin{proof}
 First, we verify upper bound. By Lemma \ref{lem12} and \ref{lem13}, we can take $T>0, M>0$ and $K>0$ such that, for $D\coloneqq\left\{(x,z,t)\in\mathbb{R}^{n}\times[T,\infty)||u(x,z,t)-\Phi_{c}(0)|\le\min(1-\Phi_{c}(0),\Phi_{c}(0))\right\}$
\begin{equation*}
\sup_{t\ge T}\|\Gamma(\cdot,t)\|_{W^{3,\infty}}\le M,\quad \inf_{(x,z,t)\in D}-u_{z}(x,z,t)\ge K.
\end{equation*}
For the constants $M$ and $\hat{\varepsilon}\coloneq1/(\|\Phi_{c^{*}}^{\prime}\|_{L^{\infty}}+1)\cdot\min\{K\varepsilon,\min(1-\Phi_{c^{*}}(0),\Phi_{c^{*}}(0))\}$, we choose a constant $\delta>0$ and functions $p(t),q(t)$ satisfying
\begin{equation*}
p(0)>0,\quad q(0)=0,\quad 0\le p(t),q(t)\le\hat{\varepsilon}\quad for \quad t\ge0.
\end{equation*}
From proof of Lemma \ref{lem3} and (\ref{asym}), we can take $z_{0}\in\mathbb{R}$ such that, for some larger $T>0$,  
\begin{equation*}
u(x,z,t)\le p(0)e^{\lambda z}\quad((x,z,t)\in\mathbb{R}^{n-1}\times[z_{0},\infty)\times[T,\infty)).
\end{equation*}
Taking $\tau_{\varepsilon}\ge T$ larger if necessary, by Lemma \ref{lem11} and \ref{lem12}, the following holds
\begin{equation*}
u(x,z,t)\le\Phi_{c^{*}}(z-\Gamma(x,\tau_{\varepsilon}))+\frac{p(0)e^{\lambda z_{0}}}{2}\le\Phi_{c^{*}}\left(\frac{z-\Gamma(x,\tau_{\varepsilon})}{\sqrt{1+|\nabla_{x}\Gamma|^{2}}}\right)+p(0)e^{\lambda z_{0}}
\end{equation*}
For any $z\ge z_{0}$, 
\begin{equation*}
u(x,z,\tau_{\varepsilon})\le p(0)e^{\lambda z}\le u^{+}(x,z,0).
\end{equation*}
For any $z<z_{0}$,
\begin{equation*}
u(x,z,\tau_{\varepsilon})\le \Phi_{c^{*}}\left(\frac{z-\Gamma(x,\tau_{\varepsilon})}{\sqrt{1+|\nabla_{x}\Gamma|^{2}}}\right)+p(0)e^{\lambda z_{0}}\le\Phi_{c^{*}}\left(\frac{z-\Gamma(x,\tau_{\varepsilon})}{\sqrt{1+|\nabla_{x}\Gamma|^{2}}}\right)+p(0)\psi(z).
\end{equation*}
So, by comparison principle, $u(x,z,t)\le u^{+}(x,z,t)$ for $t\ge\tau_{\varepsilon}$. Therefore, we have the following inequality
\begin{align*}
u(x,V(x,t-\tau_{\varepsilon}),t)-\Phi_{c^{*}}(0)&\le u^{+}(x,V(x,t-\tau_{\varepsilon}),t)-\Phi_{c^{*}}(0)\\
                                         &=\Phi_{c^{*}}(-q(t-\tau_{\varepsilon}))-\Phi_{c^{*}}(0)+p(t-\tau_{\varepsilon})\\
                                         &\le(\|\Phi_{c^{*}}^{\prime}\|_{L^{\infty}}+1)\hat{\varepsilon}\\
                                         &=\min\{K\varepsilon,\min\{1-\Phi_{c^{*}}(0),\Phi_{c^{*}}(0)\}\}.
\end{align*}
Thus, we have, for $\Gamma(x,t)\ge V(x,t-\tau_{\varepsilon})$,
\begin{align*}
K\varepsilon&\ge u(x,V(x,t-\tau_{\varepsilon},t),t)-u(x,\Gamma(x,t),t)\\
                  &\ge (\inf_{u\in[0,\min\{1-\Phi_{c^{*}}(0),\Phi_{c^{*}}(0)\}],t\ge\tau_{\varepsilon}}-u_{z})\cdot(\Gamma(x,t)-V(x,t-\tau_{\varepsilon}))\\
                  &\ge K(\Gamma(x,t)-V(x,t-\tau_{\varepsilon})).
\end{align*}
This implies $\Gamma(x,t)\le V(x,t-\tau_{\varepsilon})+\varepsilon$ for $t\ge\tau_{\varepsilon}$. Thus, we have the upper estimate. The lower estimate is followed from Lemma \ref{lem14} in a similar way.
\end{proof}

Based on these, we give a proof of Theorem 1.

\begin{proof}[Proof of Theorem \ref{thm1}]
The statements (i) and (ii) of Theorem \ref{thm1} are derived directly from Lemmas \ref{lem11} and \ref{lem12}, respectively. Thus, we only show Theorem \ref{thm1} (ii). By Lemma \ref{lem15}, the large time behavior of the level-surface $\Gamma(x,t)$ of the solution $u(x,z,t)$ is approximated by the solution $V(x,t)$ of the equation
\begin{equation*}
V_{t}=\Delta_{x}V+\frac{c^{*}}{2}|\nabla_{x}V|^{2},\quad x\in\mathbb{R}^{n-1},\quad t>0.
\end{equation*}
This means that the level-surface $\gamma(x,t)=\Gamma(x,t)+ct$ of the solution $u(x,z,t)$ of (\ref{MRea-Diff}) can be approximated by the solution $\hat{V}(x,t)$ of the equation
\begin{equation*}
\hat{V}_{t}=\Delta_{x}\hat{V}+\frac{c^{*}}{2}|\nabla_{x}\hat{V}|^{2}+c^*,\quad x\in\mathbb{R}^{n-1},\quad t>0.
\end{equation*}
Thus Theorem \ref{thm1} (ii) follows from Lemma \ref{lem15}. This completes the proof of Theorem \ref{thm1}.
\end{proof}

\section*{Acknowledgements}
The author would like to thank Toru Kan for his invaluable guidance and detailed feedback throughout this work. 
The author also thanks the anonymous referees for their careful reading and constructive comments which helped improve the manuscript

\bibliographystyle{abbrv}
\bibliography{English_draft.bib}
\end{document}